\DeclareSymbolFont{cyrletters}{OT2}{wncyr}{m}{n}
\numberwithin{equation}{section} \numberwithin{figure}{section}
\DeclareSymbolFont{cyrletters}{OT2}{wncyr}{m}{n}
\DeclareMathSymbol{\Sha}{\mathalpha}{cyrletters}{"58}
\DeclareMathSymbol{\Be}{\mathalpha}{cyrletters}{"42}
 \newcommand{\term}[1]{\textit{#1}}          
\DeclareMathOperator{\ord}{ord}
\DeclareMathOperator{\Pic}{Pic}
\DeclareMathOperator{\Div}{Div}
\DeclareMathOperator{\divv}{div}
\DeclareMathOperator{\Gal}{Gal}
\DeclareMathOperator{\supp}{supp}
\newcommand{\del}{\partial}
\def \C {{\mathbb C}}
\def \F {{\mathbb F}}
\def \P {{\mathbb P}}
\def \Q {{\mathbb Q}}
\def \R {{\mathbb R}}
\def \Z {{\mathbb Z}}
\def \cF {{\mathcal F}}
\def \cG {{\mathcal G}}
\def \cO {{\mathcal O}}
\newcommand{\bt}{{\boldsymbol t}}
\newcommand{\br}{{\boldsymbol r}}
\newcommand{\bF}{{\boldsymbol F}}
\newcommand{\bG}{{\boldsymbol G}}
\newcommand{\bX}{{\boldsymbol X}}
\newcommand{\bY}{{\boldsymbol Y}}
\newcommand{\bZ}{{\boldsymbol Z}}
\newcommand{\bT}{{\boldsymbol T}}
\newcommand{\ra}{\rightarrow}
\newcommand\blfootnote[1]{%
  \begingroup
  \renewcommand\thefootnote{}\footnote{#1}%
  \addtocounter{footnote}{-1}%
  \endgroup
}
\newtheorem{lemma}{Lemma}
\newtheorem{theorem}[lemma]{Theorem}
\newtheorem{proposition}[lemma]{Proposition}
\newtheorem{corollary}[lemma]{Corollary}
\theoremstyle{definition}
\newtheorem{example}[lemma]{Example}
\newtheorem{definition}[lemma]{Definition}
\newtheorem{remark}[lemma]{Remark}
\numberwithin{lemma}{section}
\begin{document}

\title{A naive $p$-adic height on the Jacobians of curves of genus 2}

\author{Manoy T. Trip}

\begin{abstract}
Consider a genus 2 curve defined over $\Q$ given by an affine equation of the form $y^2 = f(x)$ for some polynomial $f$ of degree 5, and let $p$ be an odd prime. Extending work of Perrin-Riou for elliptic curves, we construct a naive $p$-adic height function on a finite index subgroup of the Jacobian $J$ of this curve, using the explicit embedding of $J$ in $\P^8$ and the associated formal group described by Grant. We use the naive height to construct a global height $h_p \colon J(\Q) \ra \Q_p$ using a limit construction analogous to Tate's construction of the N\'{e}ron--Tate height, and show that it is quadratic. We then compare $h_p$ to a $p$-adic height constructed in a different way by Bianchi and show that they are equal.
\end{abstract}

\maketitle

\blfootnote{\textit{Date}: November 4, 2025.}

\thispagestyle{empty}

\section{Introduction} \label{sec:intro}

The most straightforward occurrence of a real height function on abelian varieties is what we call the naive height function on elliptic curves, based on the standard height on projective space:
\begin{align*}
    h^{\operatorname{naive}}_E \colon E(\Q) &\ra \R\\
    [x:y:z] &\mapsto \log \max\{|x|_\infty, |z|_\infty\},
\end{align*}
where $[x : y : z]$ are the projective coordinates of a point on the elliptic curve $E$ with respect to a Weierstrass model of $E$ in $\P^2$, and $|\cdot|_\infty$ is the standard archimedean absolute value on $\Q$. This height is for example used to prove the Mordell--Weil theorem for elliptic curves (e.g. \cite[VIII, Theorem 6.7]{silverman1}). This height function is close to being a quadratic form, and it can be used to construct an actual quadratic form. We refer to $h_E^{\operatorname{naive}}$ as the naive height because of its simple description and to distinguish it from this resulting quadratic height. The quadratic height is called the N\'{e}ron--Tate height and is defined as follows:
\begin{equation}\label{eq:tate}
    \begin{aligned}
        h_E \colon E(\Q) &\ra \R\\
        h_E(P) &\colonequals \lim_{n \ra \infty} \frac{1}{2^{2n}}h^{\operatorname{naive}}_E(2^n P).
    \end{aligned}
\end{equation}
This construction was given by Tate (unpublished). Around the same time, the same height was constructed by N\'{e}ron \cite{neron} as a sum of local heights. At each place $v$ of $\Q$ he defines a local height 
\[\lambda_{E, v} \colon E(\Q_v) \setminus \{\cO\} \ra \R\]
which is not quadratic but is ``quasi-quadratic'' (see \cite[Chapter VI]{silverman2}), where $\cO$ is the identity element of the group law on $E$. Then
\begin{align*}
    h_E(P) = \sum_{v \in M_\Q} \lambda_{E, v}(P)
\end{align*}
for $P \in E(\Q) \setminus \{\cO\}$, where the sum runs over all places of $\Q$. The description of the local height at the archimedean place uses the Weierstrass sigma function. 

These constructions can be adapted to define quadratic heights on the Jacobian $J$ of a curve $C$ of genus 2. We restrict to curves over $\Q$ that can be given by an affine equation of the form
\begin{equation}\label{eq:genus2model}
    y^2 = x^5 + f_4 x^4 + f_3 x^3 + f_2 x^2 + f_1 x + f_0.
\end{equation}
Such a model exists whenever $C$ has a rational Weierstrass point, which in the model becomes the point at infinity $\infty$. We assume that the coefficients $f_i$ are in $\Z$. Let $\Theta$ denote the theta divisor on $J$, which we take to be the image of the curve $C$ under the Abel--Jacobi map with respect to the point $\infty$. For a divisor $D \in \Div(J)$, we write $\supp(D) \subseteq J$ for the closed subset of points that lie on one of the components of $D$. Then, similar to \eqref{eq:tate}, a quadratic height function
\begin{align*}
    h_J \colon J(\Q) \ra \R
\end{align*}
can be defined by a limit construction using a naive height function based on the standard height on projective space, evaluated on the image of a point in $J(\Q)$ on the associated Kummer surface as embedded in $\P^3$ \cite[335]{flynnsmart} (see \S\ref{sec:genus2curves}). It can also be recovered as the sum of local heights:
\begin{align*}
    h_J(P) = \sum_{v \in M_\Q} \lambda_{J, v}(P)
\end{align*}
for $P \in J(\Q) \setminus \supp(\Theta)$, with $\lambda_{J, v} \colon J(\Q_v)\setminus \supp(\Theta) \ra \R$ (see for example \cite[\S 5]{flynnsmart}, or \cite{lang} for the case of general abelian varieties). 

Now let us fix a prime $p$. A $p$-adic height function is a $\Q_p$-valued analogue of the real height functions we have seen so far. Similar to how real height functions arise in the Birch and Swinnerton-Dyer conjecture (see \cite[Conjecture 1.1]{balakrishnan}), there is a $p$-adic analogue of this conjecture in which $p$-adic height functions appear (see \cite{teitelbaum}, \cite[Conjecture 1.3, 1.4]{balakrishnan}). Furthermore, $p$-adic heights are used in the quadratic Chabauty method (see \cite{balakrishnanChabauty}). Usually, $p$-adic height functions are defined using a local decomposition. Indeed, one way to construct a $p$-adic height is to write it as a sum of local $p$-adic height functions, where the archimedean local $p$-adic height is trivial, and the local $p$-adic height at the prime $p$ is defined in terms of a $p$-adic sigma function. For elliptic curves, such $p$-adic sigma functions $\sigma_p^{(s)}$, which are defined up to a scalar choice $s \in \Q_p$, have been described for instance by Bernardi \cite{bernardi}, N\'{e}ron \cite{neron2} and Mazur--Tate \cite{mazurtate} (see this last reference for a broader treatment of the existing literature on $p$-adic sigma functions). Bernardi defined the function for $s = 0$, based on the Taylor expansion of the complex Weierstrass sigma function. In general, these $p$-adic sigma functions can be defined as formal power series which converge on a finite index subgroup of $E(\Q_p)$ contained in the formal group associated to $E/\Q_p$. We obtain quadratic $p$-adic height functions $h_p^{(s)} \colon J(\Q) \ra \Q_p$ as a sum of local height functions, which depend on the choice of $s$. 

A similar construction was found on Jacobians of genus 2 curves by Bianchi \cite{bianchi3}. She defines local $p$-adic height functions where again the local height at $p$ depends on a choice of $p$-adic sigma function. In the genus 2 setting, these sigma functions can be defined, generalising work of Blakestad \cite{blakestad}, up to a choice of a symmetric matrix $\boldsymbol{c} = (c_{ij})_{i, j} \in M_{2 \times 2}(\Q_p)$, and are denoted $\sigma_p^{(\boldsymbol{c})}(t_1, t_2) \in \Q_p[[t_1, t_2]]$\cite[\S 3.2]{bianchi3}. The functions are made explicit using the description of the formal group corresponding to the Jacobian due to Grant \cite{grant} (see \S\ref{sec:formalgroupjacobian}). For the choice $\boldsymbol{c} = \boldsymbol{0}$, the zero matrix, we call $\sigma_p^{(\boldsymbol{0})}$ the naive sigma function, because as a power series it satisfies differential equations analogous to those for the complex sigma function, see Remark \ref{rem:padicsigmafunctions}.

We saw that in the real setting, we have two different constructions to obtain the same quadratic height function. The natural question arises as to whether it is possible to similarly construct $p$-adic height functions from a naive height which has a simpler description. For elliptic curves, this was done by Perrin-Riou \cite{perrinriou}. She defined a naive $p$-adic height on elliptic curves which can be used to construct a global quadratic $p$-adic height using a limit, analogous to Tate's construction of the real quadratic height in \eqref{eq:tate}. She considers a Weierstrass model for an elliptic curve $E/\Q$ given by an affine equation \[y^2 + a_1 xy + a_3 y = x^3 + a_2 x^2 + a_4 x + a_6\] with $a_i \in \Z$, and the natural map $\kappa_E \colon E \ra \P^1$ to the corresponding Kummer variety, which is defined by $(x, y) \mapsto [x : 1]$ and $\cO \mapsto [1 : 0]$. For any $P \in E(\Q)$, we can write $\kappa_E(P) = [w_1(P) : w_2(P)]$ normalised such that $w_1(P)$ and $w_2(P)$ are coprime integers. Then Perrin-Riou defines a naive height:
\begin{equation}\label{eq:naivepadicheightE}
    \begin{aligned}
        H_2 \colon E_p(\Q) \setminus \{\cO\} &\ra \Q_p\\
         H_2(P) &= \log_p(w_1(P)),
    \end{aligned}
\end{equation}
where $E_p(\Q)$ is a subgroup of the kernel of reduction of the Weierstrass model of $E$ modulo the prime $p$, which has finite index in $E(\Q)$. The function $\log_p$ is a branch of the $p$-adic logarithm (see \S \ref{sec:preliminaries}). By defining \[h_2(P) \colonequals \lim_{n \ra \infty} \frac{1}{p^{2n}} H_2(p^n P)\] she obtains a quadratic height, and shows a relation between $h_2$ and the height obtained from local heights using the naive sigma function $\sigma_p^{(0)}$ defined by Bernardi \cite[Th\'{e}or\`{e}me]{perrinriou2}. In particular, if $a_1 = a_2 = 0$, then these two heights are the same. 

The analogous problem in the setting of Jacobians of genus 2 curves is treated in this paper. We adapt the methods of Perrin-Riou to define a naive $p$-adic height on the Jacobians of genus 2 curves with a rational Weierstrass point, and show that the global $p$-adic height we obtain agrees with the global $p$-adic height of Bianchi using the naive sigma function $\sigma_p^{(\boldsymbol{0})}$. It follows that we produced an alternative derivation for Bianchi's height, based on more elementary methods, which can therefore be of interest to a more general audience. We then more generally give a description of a naive height for any choice of $\boldsymbol{c}$ that recovers Bianchi's height based on the corresponding choice of $p$-adic sigma function $\sigma_p^{(\boldsymbol{c})}$.

Let us consider a genus 2 curve $C$ as in \eqref{eq:genus2model} with Jacobian $J$ and an odd prime number $p$. We consider a finite index subgroup $J_p \subseteq J(\Q)$ which is contained in the kernel of reduction of $J$ at the prime $p$ (see \eqref{eq:defJp}). We define a naive $p$-adic height function $H_p \colon J_p \ra \Q_p$ (see Definition \ref{def:Hp}) and use this to define a quadratic height $h_p$ on $J(\Q)$. Our main results are summarised in the following two theorems.

\begin{theorem}\label{thm:mainthm}
Let $P \in J_p$. Then the following limit exists:
\begin{equation}\label{eq:quadraticheighthpJ}
    h_p(P) \colonequals \lim_{n \ra \infty} \frac{1}{p^{2n}} H_p(p^n P).
\end{equation}
This uniquely extends to a function $h_p \colon J(\Q) \ra \Q_p$ which is a quadratic form.
\end{theorem}

\begin{theorem}\label{thm:comparison}
    Let $h_p \colon J(\Q) \ra \Q_p$ be the quadratic height function in Theorem \ref{thm:mainthm}, and let $h_p^ {(\boldsymbol{0})} \colon J(\Q) \ra \Q_p$ be the quadratic height defined by Bianchi in \cite[Definition 4.8]{bianchi3} using the naive $p$-adic sigma function $\sigma_p^{(\boldsymbol{0})}$. Then $h_p = h_p^ {(\boldsymbol{0})}$ on all of $J(\Q)$.
\end{theorem}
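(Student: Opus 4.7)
The strategy exploits the fact that both $h_p$ and $h_p^{(\boldsymbol{0})}$ are quadratic forms on $J(\Q)$, so their difference $d := h_p - h_p^{(\boldsymbol{0})}$ is also a quadratic form. If $d$ vanishes on the finite-index subgroup $J_p$, then $d \equiv 0$ on all of $J(\Q)$: for any $P \in J(\Q)$, setting $m := [J(\Q) : J_p]$ gives $mP \in J_p$, hence $m^2 d(P) = d(mP) = 0$, and because $\Q_p$ has characteristic zero this forces $d(P) = 0$. The theorem therefore reduces to proving the identity on $J_p$.

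For $P \in J_p$, we have $h_p(P) = \lim_{n \ra \infty} p^{-2n} H_p(p^n P)$ by Theorem \ref{thm:mainthm}, while Bianchi's construction yields a local decomposition
\[
    h_p^{(\boldsymbol{0})}(P) = \lambda_p^{(\boldsymbol{0})}(P) + \sum_{v \neq p} \lambda_v^{(\boldsymbol{0})}(P),
\]
with the local term at $p$ expressed through $\sigma_p^{(\boldsymbol{0})}$ and the away-from-$p$ terms controlled by intersection and reduction data at the relevant primes. The plan is to use Grant's explicit embedding of $J$ in $\P^8$ to write $H_p(P)$ in a form compatible with this decomposition, and then to prove an identity
\[
    H_p(P) - h_p^{(\boldsymbol{0})}(P) = R(P),
\]
where $R \colon J_p \ra \Q_p$ is a function satisfying $v_p(R(p^n P)) - 2n \ra \infty$. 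The Tate-style limit then kills $R$ and recovers $h_p^{(\boldsymbol{0})}(P)$, closing the argument.

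The main technical obstacle is establishing this controlled error $R$. Since $H_p$ is defined from the $\P^8$ coordinates of the image of $P$, whereas $\lambda_p^{(\boldsymbol{0})}$ is defined through a power series in the Grant formal-group parameters $(t_1, t_2)$, the plan is to unwind both descriptions in these parameters, apply the product formula to pair the contributions at primes $v \neq p$ inside $H_p$ with the corresponding local heights $\lambda_v^{(\boldsymbol{0})}(P)$, and then use the defining transformation law of $\sigma_p^{(\boldsymbol{0})}$ to match the contribution at $p$ up to a formal-group term whose valuation grows at the required rate under iteration of $[p]$. Because $[p]$ acts on the formal group by $(t_1, t_2) \mapsto (p\, t_1, p\, t_2)$ modulo terms of higher order, any genuinely formal-group remainder is amplified in $p$-adic valuation faster than $p^{2n}$, giving the desired vanishing. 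The delicate point will be the explicit coordinate matching at $p$: comparing the expansion of the naive projective coordinates from Grant's embedding with the power-series characterisation of $\sigma_p^{(\boldsymbol{0})}$ used by Bianchi, and verifying that the discrepancy lies in a submodule of the formal group ring whose elements have the requisite large $p$-adic valuation on $p^n J_p$.
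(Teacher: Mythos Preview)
Your strategy is exactly the one the paper follows: write $h_p^{(\boldsymbol{0})}(P) = H_p(P) + R(P)$ with $R$ a formal-group term whose $p$-adic valuation beats $2n$ under $P \mapsto p^n P$, then pass to the limit and finish by quadraticity on a finite-index subgroup. Two technical points in your outline need sharpening, and the paper handles both explicitly.

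First, the local decomposition $h_p^{(\boldsymbol{0})}(P) = \sum_q \lambda_{J,q}^{(p)}(P)$ is only defined for $P \notin \supp(\Theta)$, so you cannot directly write it for every $P \in J_p$, and more importantly you cannot freely evaluate it along the full sequence $p^n P$. The paper resolves this by showing (Lemma~\ref{lem:S(P)infinite}) that for nonzero $P$ in the kernel of reduction, infinitely many $p^n P$ lie outside $\supp(\Theta)$; this uses Faltings's theorem, since $\supp(\Theta) \cap J(\Q)$ is in bijection with $C(\Q)$. One then takes the limit along this infinite subset, which suffices because the full limit is already known to exist.

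Second, the clean identification of the away-from-$p$ local heights with $-\log_p|x_1(P)|_q$ (which is what makes the product formula collapse the sum into $\log_p x_1(P)$) is cited from Bianchi only for points in the kernel of reduction at the bad primes, not merely in $U_q$. The paper therefore works on the smaller finite-index subgroup $J_{\operatorname{nice}} = J(\Q) \cap J_1(\Q_p) \cap \bigcap_{q \text{ bad}} J_1(\Q_q) \subseteq J_p$. On $J_{\operatorname{nice}} \setminus \supp(\Theta)$ one gets $h_p^{(\boldsymbol{0})}(P) = -\log_p\bigl(\sigma_p^{(\boldsymbol{0})}(\bt(P))^2 / x_1(P)\bigr)$, and since $x_4/x_1 = \wp_{11}$ this is $H_p(P) - \log_p\bigl(\sigma_p^{(\boldsymbol{0})}(\bt(P))^2\,\wp_{11}(\bt(P))\bigr)$. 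The remainder you call $R$ is thus $-\log_p$ of the series $\sigma_p^{(\boldsymbol{0})}(\bt)^2\,\wp_{11}(\bt)$, and the explicit expansions give $\sigma_p^{(\boldsymbol{0})}(\bt)^2\,\wp_{11}(\bt) \in 1 + (t_1,t_2)^4\,\Q[[t_1,t_2]]$; Lemma~\ref{lemma:mostgenerallemma} then kills it in the limit. This is precisely the ``delicate coordinate matching'' you anticipate, and it reduces to a two-line check once the expansions of $\sigma_p^{(\boldsymbol{0})}$ and $\wp_{11}$ from \eqref{eq:coordinatefunctionsKummersurface} are on the table.
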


In \S\ref{sec:preliminaries}, we give a description of the Jacobian of a genus $2$ curve of the form \eqref{eq:genus2model}, and of the corresponding Kummer surface. We recall some properties of formal groups and see how the kernel of reduction of the Jacobian can be identified with a formal group. In \S\ref{sec:naiveheight} we show the existence of the limit in Theorem \ref{thm:mainthm}, and we show the quadraticity of the resulting global height function in \S\ref{sec:quadraticity}. Finally, we show the equality of $h_p$ and $h_p^{(\boldsymbol{0})}$ in \S\ref{sec:comparison}, and describe how we can generalise the naive height definition to also recover the heights $h_p^{(\boldsymbol{c})}$ for other choices of $\boldsymbol{c}$.

\subsection*{Acknowledgements}
I want to thank Steffen M\"{u}ller and Francesca Bianchi for proposing this topic to me and more generally for introducing me to the research area of heights and $p$-adic analysis. We have had many useful discussions and both were very generous with their time and feedback, for which I am very grateful. I also thank Oliver Lorscheid for helpful comments on an earlier version of the manuscript. I want to thank the anonymous referee for their detailed feedback and useful suggestions, in particular for suggesting the generalisation in Theorem \ref{thm:generalnaiveheight} and the geometric argument in Remark \ref{rem:alternativeproofLemma}.

\section{Preliminaries}\label{sec:preliminaries}
Let $q$ be a prime. We will write $\ord_q$ for the $q$-adic valuation on $\Q_q$, and $|\cdot|_q$ for the corresponding $q$-adic absolute value, normalised such that $|q|_q = q^{-1}$. Together with the standard archimedean absolute value $|\cdot|_\infty$, these absolute values satisfy the product formula, $\prod_{v \in M_\Q} |a|_v = 1$ for all $a \in \Q^\times$. We will also make use of the $q$-adic logarithm function for odd primes $q$, which we denote by $\log_q \colon \Q_q^\times \rightarrow \Q_q$. For $a \in 1 + q\Z_q$, this logarithm is defined by the usual power series $$\log_q(a) = \sum_{n=1}^\infty \frac{(-1)^{n+1}}{n}(a-1)^n.$$ We extend it uniquely to $\Z_q^\times$ by requiring $\log_q(a) + \log_q(b) = \log_q(ab)$ for all $a, b \in \Z_q^\times$, and using the fact that any $a \in \Z_q^\times$ can be written as $a = cr$ for some unique $c \in 1 + q\Z_q$ and $r \in \Z_q^\times$ a $(q-1)$-st root of unity. We further extend $\log_q$ to $\Q_q^\times$ by setting $\log_q(q) = 0$ (the Iwasawa logarithm). It satisfies
\begin{align}
    \log_q(1 + q^k\Z_q) \subseteq q^k\Z_q\label{eq:logarithm}
\end{align}
for $k  \in \Z_{>0}$.

\subsection{Jacobians of genus 2 curves and their Kummer surfaces}\label{sec:genus2curves}
Let us fix an odd prime $p$. Let $C$ be a smooth projective curve of genus $2$ defined over $\Q$, given as the normalisation of a projective curve with affine equation of the form 
\[y^2 = x^5 + f_4 x^4 + f_3 x^3 + f_2 x^2 + f_1 x + f_0.\]
We assume that the coefficients $f_i$ are in $\Z$. The curve $C$ has one point at infinity $\infty$. Let us denote the Jacobian of $C$ by $J$ and its set of rational points by $J(\Q)$. The Jacobian $J$ can be identified with $\Pic_C^0$, and we can embed $C$ into $J$ via the Abel--Jacobi map $\Phi_\infty \colon C \rightarrow \Pic_C^0$ given by $P \mapsto [P - \infty]$, where $[D]$ denotes the linear equivalence class of a divisor $D$ on $C$. The image of this map is a subvariety $\Theta$ of $J$ of codimension one, which is called the theta divisor. Because of the identification of $J$ with $\Pic_{C}^0$, each element of $J(\Q)$ can be represented by a pair of points $\{P_1, P_2\}$ of $C$ which is fixed under the action of $\Gal(\overline{\Q}/\Q)$. We consider the Riemann-Roch space $\mathcal{L}(3\Theta)$, which is the space of rational functions $g$ on $J$ such that $3 \Theta + \divv(g)$ is an effective divisor. This is a $9$-dimensional $\overline{\Q}$-vector space with a basis \[\{1, \wp_{11}, \wp_{12}, \wp_{22}, \wp_{111}, \wp_{112}, \wp_{122}, \wp_{222}, \wp\}\] of functions that are defined over $\Q$, described explicitly in \cite[99]{grant}. We can define an embedding of $J$ into $\P^8$ using these functions as follows:
\begin{align*}
    i \colon \Pic_C^0 &\ra \P^{8}\\
    [P_1 + P_2 - 2 \infty] &\mapsto [1: \wp_{11}: \wp_{12}: \wp_{22}: \tfrac{1}{2}\wp_{111}: \tfrac{1}{2}\wp_{112}: \tfrac{1}{2}\wp_{122}: \tfrac{1}{2}\wp_{222}: \tfrac{1}{2}(\wp + f_3 \wp_{12} - f_1)].
\end{align*}
Explicit defining equations for the image of this map as a projective variety are given in \cite[Corollary 2.15]{grant}. For the nine projective coordinates of $\P^8$, we use the notation $X_0$, $X_{11}$, $X_{12}$, $X_{22}$, $X_{111}$, $X_{112}$, $X_{122}$, $X_{222}$, $X$ in this order, after \cite{grant}. As rational functions, we then have
\begin{align*}
    \wp_{ij} = \frac{X_{ij}}{X_0}, \ \wp_{ijk} = 2\frac{X_{ijk}}{X_0}\ \text{and} \ \wp = \frac{2X - f_3 X_{12}+ f_1 X_0}{X_0}.
\end{align*}
We note that $i([0]) = [0 : 0 : 0 : 0 : 1 : 0 : 0 : 0 : 0] \equalscolon \cO$.

As in the case of Perrin-Riou's naive height on elliptic curves, we want to define a naive height on $J$ such that the height of a point $P$ only depends on the image of $P$ on the Kummer variety of $J$. Let us denote the Kummer surface of $J$ by $K$. It can be embedded in $\P^3$ using a basis of $\mathcal{L}(2\Theta)$ (see \cite[Theorem 4.8.1]{birkenhake}). The even functions $1$, $\wp_{11}$, $\wp_{12}$ and $\wp_{22}$ form such a basis (\cite[\S 2]{flynn2}), and define a map to $\P^3$ as follows:
\begin{equation}\label{eq:kummermap}
    \begin{aligned}
        \Pic_C^0 &\ra \P^{3}\\
        [P_1 + P_2 - 2\infty] &\mapsto [1: \wp_{22}: -\wp_{12}:\wp_{11}].
    \end{aligned}    
\end{equation}

The image of this map gives an embedding of $K$ in $\P^3$ and can be defined by a homogeneous equation of the form 
\begin{equation*}
    G(X, Y, Z, W) \colonequals R(X, Y, Z) W^2 + S(X, Y, Z) W + T(X, Y, Z) = 0,
\end{equation*}
where $R$, $S$ and $T$ are homogeneous polynomials with coefficients in $\Z$ of total degree 2, 3, and 4, respectively. The explicit equation can be found in \cite[Appendix A]{flynn2}. We write $\kappa \colon J \ra \P^3$ for the map from the Jacobian to this explicit embedding of the Kummer surface $K$ in $\P^3$.

Let $P, Q \in J$. On the Kummer surface, we cannot in general determine $\kappa(P+Q)$ from $\kappa(P)$ and $\kappa(Q)$, but it is possible to define a multiplication-by-$m$ map $\mu_m$ on the Kummer surface for any $m \in \Z$, such that $\mu_m(\kappa(P)) = \kappa(mP)$. The limit formula for the quadratic height \eqref{eq:quadraticheighthpJ} involves the evaluation of $H_p$ on multiples of a point $P$, and to simplify this limit formula we use the equations of $\mu_m$ to express $\kappa(mP)$ in terms of the coordinates of $\kappa(P)$. In \cite[Appendix C]{flynn2}, explicit homogeneous polynomials $\delta_1, \ldots, \delta_4 \in \Z[k_1, k_2, k_3, k_4]$ of total degree 4 are given, such that for all $P \in J$ with $\kappa(P) = [w_1 : w_2 : w_3 : w_4]$, we have \[\kappa(2P) = \left[\delta_1(w_1, w_2, w_3, w_4): \delta_2(w_1, w_2, w_3, w_4): \delta_3(w_1, w_2, w_3, w_4): \delta_4(w_1, w_2, w_3, w_4)\right].\]

For general multiples, Uchida gives an explicit description which we reproduce in Theorem \ref{thm:definitionmu}. It uses the existence of biquadratic forms $B_{ij}$ satisfying the properties in the following theorem.

\begin{theorem}[{\cite[Theorem 3.4.1]{casselsflynn}}]\label{thm:biquadraticforms}
    For $i, j \in \{1, 2, 3, 4\}$, there exist polynomials $B_{ij}$ with coefficients in $\Z$, which are biquadratic in two sets of variables $k_1, \ldots, k_4$ and $l_1, \ldots, l_4$, with the following property. For any $P, Q \in J$, let us fix Kummer coordinates 
    \begin{align*}
        &\kappa(P) = [x_1: \cdots : x_4], \ \kappa(Q) = [y_1: \cdots : y_4],\\
        &\kappa(P + Q) = [z_1: \cdots : z_4], \ \kappa(P - Q) = [w_1: \cdots : w_4].
    \end{align*}
     Then there exists a constant $c \in \overline{\Q}^\times$ such that for all $i, j \in \{1, 2, 3, 4\}$, we have
    \begin{align*}
        z_i w_j + w_i z_j = 2c B_{ij}((x_1, x_2, x_3, x_4), (y_1, y_2, y_3, y_4)).
    \end{align*}
\end{theorem}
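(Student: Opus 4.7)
The plan is to interpret the asserted identity as an equality of sections of certain line bundles on $J \times J$ and to exploit the symmetry of the theta divisor to force the biquadratic structure.

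Fix a basis $X_1, \ldots, X_4 \in H^0(J, \cO(2\Theta))$ whose ratios define the Kummer embedding $\kappa$. Let $\sigma, \delta, p_1, p_2 \colon J \times J \ra J$ denote the addition, subtraction, and two projection maps. Because $2\Theta$ is a symmetric divisor on $J$, i.e., $[-1]^* \cO(2\Theta) \cong \cO(2\Theta)$, the theorem of the cube (equivalently, the seesaw principle applied twice, cf.\ Mumford) yields a canonical isomorphism of line bundles on $J \times J$:
\[
\sigma^* \cO(2\Theta) \otimes \delta^* \cO(2\Theta) \ \cong\ p_1^* \cO(4\Theta) \otimes p_2^* \cO(4\Theta).
\]
This isomorphism is unique up to a single global scalar in $\overline{\Q}^\times$, which will play the role of the constant $c$ in the statement.

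Next, consider
\[
F_{ij}(P, Q) = X_i(P + Q)\, X_j(P - Q) + X_j(P + Q)\, X_i(P - Q),
\]
which is a global section of the left-hand side line bundle. Since each $X_k$ is even, a direct check shows that $F_{ij}$ is invariant under the sign flips $P \mapsto -P$ and $Q \mapsto -Q$ taken separately. Transporting $F_{ij}$ across the cube isomorphism produces an element of $H^0(J, \cO(4\Theta)) \otimes H^0(J, \cO(4\Theta))$ that is even in each factor. Even sections of $\cO(4\Theta) = \cO(2\Theta)^{\otimes 2}$ descend through $\kappa$ to sections of $\cO_K(2)$ on the Kummer surface, and hence are spanned by the ten products $X_k X_l$ with $k \leq l$. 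Consequently, $F_{ij}$ equals $2c$ times a polynomial $B_{ij}$ of degree at most $2$ in each of the two sets of Kummer coordinates $(x_1, \ldots, x_4)$ and $(y_1, \ldots, y_4)$, i.e.\ a biquadratic form.

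For integrality, I would appeal to the fact that Grant's basis $1, \wp_{11}, \wp_{12}, \wp_{22}$ and the defining equations of the Kummer surface in $\P^3$ are constructed over $\Z[f_0, \ldots, f_4]$, so the group law restricted to these coordinates (and therefore the resulting $B_{ij}$) has integer coefficients after a suitable normalisation of $c$. The main obstacle in this plan is verifying that a \emph{single} scalar $c$ works simultaneously for all sixteen pairs $(i, j)$: this is automatic in the conceptual approach, since $c$ arises from one global isomorphism of line bundles and not from sixteen independent rescalings; a purely computational alternative would fix $c$ by evaluating both sides at a convenient point (say a $2$-torsion point), and then check that compatibility for all remaining pairs $(i, j)$ follows from the symmetry of the construction.
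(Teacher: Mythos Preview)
The paper does not prove this statement: it is quoted from Cassels--Flynn \cite[Theorem 3.4.1]{casselsflynn} as an input, with the explicit $B_{ij}$ taken from \cite{biquadratic}, so there is nothing in the paper to compare your argument against.

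On its own merits your sketch is the standard line-bundle argument and is correct in outline. The cube/seesaw isomorphism $\sigma^{*}\cO(2\Theta)\otimes\delta^{*}\cO(2\Theta)\cong p_{1}^{*}\cO(4\Theta)\otimes p_{2}^{*}\cO(4\Theta)$ is exactly the right engine; the symmetrisation $z_{i}w_{j}+z_{j}w_{i}$ forces invariance under $Q\mapsto-Q$, and evenness of the $X_{k}$ gives invariance under $P\mapsto-P$, so the transported section lands in $H^{0}(J,\cO(4\Theta))^{+}\otimes H^{0}(J,\cO(4\Theta))^{+}$. Your identification of $H^{0}(J,\cO(4\Theta))^{+}$ with $H^{0}(K,\cO_{K}(2))$, hence with the ten monomials $X_{k}X_{l}$, is also right (the Kummer quartic imposes no relations on quadrics, so the count $10=16-6$ matches). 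The one step that would need more care in a full proof is the compatibility of the cube isomorphism with the separate involutions $[-1]\times 1$ and $1\times[-1]$: you use this implicitly when you say the transported section is ``even in each factor'', and it does hold, but it is not entirely automatic. The two points you flag as obstacles---integrality and a single $c$ for all sixteen $(i,j)$---are precisely where the cited reference turns computational; your conceptual justification for the single $c$ (one isomorphism, one scalar) is valid.
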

Explicit formulas for the forms $B_{ij}$ can be found in \cite{biquadratic}.

\begin{theorem}[{\cite[Theorem 3.3, Proposition 3.6, Lemma 3.8]{uchida}}]\label{thm:definitionmu}
    For any $m \geq 0$ and $i = 1, 2, 3, 4$, there exist homogeneous polynomials $\mu_{m,i} \in \Z[k_1, k_2, k_3, k_4]$ of total degree $m^2$ such that:
    \begin{align*}
        \mu_{0,1} &= \mu_{0, 2} = \mu_{0, 3} = 0,\\
        \mu_{0, 4} &= 1,\\
        \mu_{1, i} &= k_i,\\
        \mu_{2m, i} &= \delta_i(\mu_m)  \qquad \qquad \; \text{for $m \geq 1$,}\\
        \mu_{2m+1, i}k_i &= B_{ii}(\mu_{m+1}, \mu_m)  \quad \text{for $m \geq 1$},
    \end{align*}
    where we write $\mu_m = (\mu_{m,1}, \ldots, \mu_{m, 4})$. For all $P \in J(\overline{\Q})$ with $\kappa(P) = [w_1: w_2: w_3: w_4]$, we have $$\kappa(mP) = [\mu_{m,1}(w_1, \ldots, w_4): \cdots :\mu_{m,4}(w_1, \ldots, w_4)].$$
\end{theorem}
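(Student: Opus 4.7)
The plan is to define the $\mu_{m,i}$ by strong induction on $m$, taking the stated recursion as definition, while maintaining as inductive hypothesis that (i) $\mu_{m,i} \in \Z[k_1, \ldots, k_4]$ is homogeneous of total degree $m^2$ and (ii) $\kappa(mP) = [\mu_{m,1}(x) : \cdots : \mu_{m,4}(x)]$ for every $P \in J(\overline{\Q})$ with $\kappa(P) = [x_1 : \cdots : x_4]$. The cases $m = 0, 1$ are immediate from the given formulas.

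In the even step $m = 2n$ with $n \geq 1$, we set $\mu_{m,i} \colonequals \delta_i(\mu_n)$. Homogeneity of degree $4n^2 = m^2$ and integrality of coefficients are preserved by composition, and identity (ii) follows from applying the duplication formula $\kappa(2Q) = [\delta_1(\kappa(Q)) : \cdots : \delta_4(\kappa(Q))]$ to $Q = nP$ and substituting the inductive expression for $\kappa(nP)$. In the odd step $m = 2n+1$ with $n \geq 1$, we apply Theorem \ref{thm:biquadraticforms} with $P' = (n+1)P$ and $Q' = nP$, so that $P' + Q' = (2n+1)P$ and $P' - Q' = P$. Fixing the representatives $\kappa(P') = \mu_{n+1}(k)$, $\kappa(Q') = \mu_n(k)$, $\kappa(P' - Q') = (k_1, \ldots, k_4)$, and some representative $(z_1, \ldots, z_4)$ of $\kappa((2n+1)P)$, the theorem yields a scalar $c \in \overline{\Q}^\times$ and identities $z_i k_j + k_i z_j = 2c B_{ij}(\mu_{n+1}, \mu_n)$; specializing to $i = j$ gives $z_i k_i = c \cdot B_{ii}(\mu_{n+1}, \mu_n)$. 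A degree count shows that if $k_i$ divides $B_{ii}(\mu_{n+1}, \mu_n)$ then the quotient has total degree $2(n+1)^2 + 2n^2 - 1 = (2n+1)^2 = m^2$, as required.

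The main obstacle is therefore twofold: first, to establish the polynomial divisibility $k_i \mid B_{ii}(\mu_{n+1}(k), \mu_n(k))$ in $\Z[k_1, \ldots, k_4]$; second, to show that the four resulting quotients $\mu_{2n+1, i}$ agree projectively and give $\kappa((2n+1)P)$. For divisibility, the strategy is to observe that on the hyperplane $\{k_i = 0\}$ the biquadratic identity forces $c \cdot B_{ii}(\mu_{n+1}, \mu_n)$ to vanish geometrically, hence $B_{ii}(\mu_{n+1}, \mu_n)$ vanishes set-theoretically on $\{k_i = 0\}$ and is divisible by the linear form $k_i$ over $\Q$, and then over $\Z$ by primitivity of the Kummer recursion. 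For projective compatibility, one uses the off-diagonal identities $z_i k_j + k_i z_j = 2c B_{ij}(\mu_{n+1}, \mu_n)$ to express the ratios $z_i / z_j$ in terms of the $B_{ij}$ and cross-check them against the diagonal quotients; this reduces to polynomial identities among the $B_{ij}$ that can be verified directly from the explicit formulas in \cite{biquadratic}. Once the scalar $c$ is absorbed into the choice of projective representatives, the four components $\mu_{2n+1, i}$ assemble into the desired polynomial multiplication-by-$(2n+1)$ map on the Kummer surface, closing the induction.
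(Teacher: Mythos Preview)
The paper does not give its own proof of this result; it is quoted from Uchida, so there is nothing in the paper to compare against directly. Your inductive scaffolding is the natural one and mirrors how Uchida organises the construction.

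That said, the odd step has a real gap. Applying Theorem~\ref{thm:biquadraticforms} to $(n+1)P$ and $nP$ gives, for each $P \in J(\overline{\Q})$ with $\kappa(P) = x$ and $x_i = 0$, that $B_{ii}(\mu_{n+1}(x), \mu_n(x)) = 0$. But this only shows that the polynomial $B_{ii}(\mu_{n+1}(k), \mu_n(k))$ restricted to $\{k_i = 0\}$ vanishes on the curve $K \cap \{k_i = 0\}$, not on the whole hyperplane. Vanishing on a degree-$4$ plane curve does not force a homogeneous polynomial in three variables to be identically zero, so you cannot conclude that $k_i$ divides $B_{ii}(\mu_{n+1}, \mu_n)$ in $\Z[k_1, \ldots, k_4]$; at most you get divisibility modulo the quartic defining $K$, whereas the theorem asserts an identity of free polynomials. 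Uchida handles this divisibility by a separate argument (his Lemma~3.8), and your sketch does not supply an alternative.

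A secondary issue: you say the scalar $c$ from Theorem~\ref{thm:biquadraticforms} can be ``absorbed into the choice of projective representatives,'' but $c$ depends on the point $P$, while the $\mu_{2n+1,i}$ are fixed polynomials. Even after divisibility is established, one must check that the four quotients $B_{ii}(\mu_{n+1}, \mu_n)/k_i$ form a single projective quadruple as polynomials, i.e., that $k_j\,B_{ii}(\mu_{n+1}, \mu_n) = k_i\,B_{jj}(\mu_{n+1}, \mu_n)$ for all $i, j$, independently of any point. Your appeal to the off-diagonal $B_{ij}$ is in the right spirit, but as written it again only yields relations on $K$, not in the polynomial ring.
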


\begin{remark}
    We note that everything discussed in this section can also be done if $C$ is a genus 2 curve over any field $k$ of characteristic different from $2$. 
\end{remark}

Consider $P \in J(\Q)$ and write $\kappa(P) = [w_1: w_2: w_3: w_4]$ normalised such that $w_i \in \Z$ and $\gcd(w_1, w_2, w_3, w_4) = 1$. Then we use the notation $w_i(P) \colonequals w_i$ and 
\begin{equation}
   w(P) = (w_1(P), w_2(P), w_3(P), w_4(P)).\label{eq:normalizationkappa}
\end{equation}
Note that the normalisation is only uniquely defined up to sign, so also the values $w_i(P)$ are unique up to sign. However, we will only ever use these coordinates in the context of ratios, $v$-adic absolute values, or $p$-adic logarithms, in which case the sign is irrelevant. 

\subsection{Formal groups}\label{sec:formalgroups}

Let $R$ denote a commutative ring with identity. We recall some definitions and properties of formal groups in two dimensions, see \cite[\S 9]{hazewinkel} for a general introduction.
\begin{definition}
     A \term{$2$-parameter formal group} $\mathcal{F}$ over $R$ is a pair $\bF = (F_1, F_2)$ of power series in $R[[X_1, X_2, Y_1, Y_2]]$ with the following properties:
    \begin{enumerate}
        \item $F_i(X_1, X_2, Y_1, Y_2) = X_i + Y_i + (\text{terms of total degree } \geq 2)$.
        \item $\bF(\bX, \bF(\bY, \bZ)) = \bF(\bF(\bX, \bY), \bZ)$.
    \end{enumerate}
    We call $\bF(\bX, \bY)$ the \term{formal group law} of $\cF$. We denote the formal group by $(\cF, \bF)$ if we want to make the formal group law explicit. We say $\cF$ is \term{commutative} if $\bF$ furthermore satisfies $\bF(\bX, \bY) = \bF(\bY, \bX)$.
\end{definition}

Formal group laws satisfy group-like properties: we have $\bF(\bX, \boldsymbol{0}) = \bX$ and $\bF(\boldsymbol{0}, \bY) = \bY$, and there is a unique pair of power series $\boldsymbol{i}(\bT) \in R[[T_1, T_2]]^2$, called the \term{formal inverse}, satisfying $\bF(\bT, \boldsymbol{i}(\bT)) = \bF(\boldsymbol{i}(\bT), \bT) = \boldsymbol{0}$.

\begin{definition}
    Let $(\cF, \bF)$ and $(\cG, \bG)$ be $2$-parameter formal groups over $R$. A \term{formal group homomorphism} $\boldsymbol{f} = (f_1, f_2)$ from $\cF$ to $\cG$ defined over $R$ is a pair of power series $f_1, f_2 \in R[[T_1, T_2]]$ with no constant term, satisfying $$\boldsymbol{f}(\bF(\bX, \bY)) = \bG(\boldsymbol{f}(\bX), \boldsymbol{f}(\bY)).$$
\end{definition}

\begin{example}\label{ex:multiplicationbym}
    Let $(\cF, \bF)$ be a commutative $2$-parameter formal group. We can define the \term{multiplication-by-$m$ maps} for $m \in \Z_{\geq 0}$ inductively as homomorphisms $[m] \colon \cF \ra \cF$ by
    \begin{align*}
        [0](\bT) &= \boldsymbol{0},\\
        [m+1](\bT) &= \bF([m](\bT), \bT).
    \end{align*}
    It follows from an induction argument that they are of the form $$[m](\bT) = m\bT + (\text{terms of total degree $\geq 2$}).$$
\end{example}

Let us now assume that $R$ is a complete local commutative ring with maximal ideal $\mathfrak{m}$, and let $(\cF, \bF)$ be a $2$-parameter formal group over $R$. In this case, if we take $\boldsymbol{r} = (r_1, r_2)$ and $\boldsymbol{s} = (s_1, s_2)$ in $\mathfrak{m} \times \mathfrak{m}$, then $\bF(\boldsymbol{r}, \boldsymbol{s})$ converges in $\mathfrak{m} \times \mathfrak{m}$ by the completeness of $R$. Similarly, $\boldsymbol{i}(\boldsymbol{r})$ converges in $\mathfrak{m} \times \mathfrak{m}$. Hence we can define a \term{group associated to $\cF$}, denoted by $\cF(\mathfrak{m} \times \mathfrak{m})$, with underlying set $\mathfrak{m} \times \mathfrak{m}$, group operations $\boldsymbol{r} + \boldsymbol{s} = \bF(\boldsymbol{r}, \boldsymbol{s})$ and $-\boldsymbol{r} = \boldsymbol{i}(\boldsymbol{r})$ for all $\boldsymbol{r}, \boldsymbol{s} \in \mathfrak{m} \times \mathfrak{m}$, and identity element $(0,0)$. A formal group homomorphism also naturally defines a group homomorphism of the associated groups. For instance, the group homomorphism induced by the homomorphism $[m]$ is the usual {multiplication-by-$m$} homomorphism (see \cite[Proposition 1.6.7, Proposition 1.6.8]{tripThesis}). The following result follows straightforwardly from our description of the homomorphism $[m]$ in Example \ref{ex:multiplicationbym} (see \cite[Proposition 1.6.9]{tripThesis} for an explicit argument).

\begin{lemma}\label{lem:multipleofpninformalgroup}
    Let $p$ be a prime. Let $(\cF, \bF)$ be a commutative $2$-parameter formal group over $\Z_p$. For all $\br \in p\Z_p \times p\Z_p$ and $m \in \Z_{\geq 0}$, we have \[[p^m](\br) \in p^{m+1}\Z_p \times p^{m+1}\Z_p.\]
\end{lemma}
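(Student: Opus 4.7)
The plan is to proceed by induction on $m$, using the description of the multiplication-by-$p$ homomorphism from Example \ref{ex:multiplicationbym}. The base case $m=0$ is trivial since $[1](\br) = \br \in (p\Z_p)^2 = (p^{0+1}\Z_p)^2$.

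For the inductive step, I would use the fact that multiplication-by-$m$ maps on a commutative formal group compose as expected, i.e.\ $[p^{m+1}] = [p] \circ [p^m]$ (which follows from the inductive definition in Example \ref{ex:multiplicationbym} together with commutativity, or equivalently from the fact that the induced map on the associated group is the usual multiplication, as noted in the excerpt). So, writing $\bs = [p^m](\br) \in (p^{m+1}\Z_p)^2$ by the inductive hypothesis, it suffices to show $[p](\bs) \in (p^{m+2}\Z_p)^2$.

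For this I would invoke the expansion $[p](\bT) = p\bT + (\text{terms of total degree } \geq 2)$ from Example \ref{ex:multiplicationbym}. The linear part $p\bs$ lies in $(p^{m+2}\Z_p)^2$ by the inductive hypothesis. For the remaining terms, since $\bs = (s_1, s_2)$ with $s_1, s_2 \in p^{m+1}\Z_p$, any monomial $s_1^a s_2^b$ with $a + b \geq 2$ lies in $p^{2(m+1)}\Z_p$. Because the power series $[p]$ is defined over $\Z_p$, each coefficient has $p$-adic absolute value at most $1$, so each such term remains in $p^{2(m+1)}\Z_p$. The sum converges there by completeness, and since $2(m+1) \geq m+2$ for $m \geq 0$, we conclude $[p](\bs) \in (p^{m+2}\Z_p)^2$, completing the induction.

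There is no real obstacle here; the only point requiring a little care is that the higher-order terms are controlled by the ultrametric inequality precisely because the formal group is defined over $\Z_p$, so no denominators enter. The argument is the direct $2$-parameter analogue of \cite[IV, Proposition 2.3]{silverman1}, and indeed the author essentially cites this already via \cite[Proposition 1.6.9]{tripThesis}.
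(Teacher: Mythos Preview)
Your proof is correct and is exactly the argument the paper has in mind: the paper does not spell out a proof but simply says the lemma follows from the description of $[m]$ in Example~\ref{ex:multiplicationbym} and refers to \cite[Proposition 1.6.9]{tripThesis} for details. Your induction on $m$, using $[p^{m+1}](\br) = [p]\bigl([p^m](\br)\bigr)$ together with the shape $[p](\bT) = p\bT + (\text{degree} \geq 2)$ over $\Z_p$, is precisely that detailed argument.
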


\subsection[The formal group associated to the Jacobian of a genus \texorpdfstring{$2$}{2} curve]{The formal group associated to the Jacobian of a genus $\boldsymbol{2}$ curve}\label{sec:formalgroupjacobian}

Consider a smooth projective curve $C$ of genus $2$ over $\Q$ given by an affine equation as described in \S\ref{sec:genus2curves}. We saw that the corresponding Jacobian can be embedded in $\P^8$. The identity element $\cO$ in $J$ satisfies $X_{111} \neq 0$. Let us look at the affine part of $J$ where $X_{111} \neq 0$, and dehomogenise the defining equations of $J$ accordingly. We write \[x = \frac{X}{X_{111}}, \quad x_0 = \frac{X_0}{X_{111}}, \quad x_{ij} = \frac{X_{ij}}{X_{111}}, \quad x_{ijk} = \frac{X_{ijk}}{X_{111}}.\] Consider the local ring $\cO_{J, \cO}$ of $J/\Q$ at $\cO$. The maximal ideal $\mathfrak{m}$ of $\cO_{J, \cO}$ is the ideal generated by the functions $x_0, x_{11}, x_{12}, x_{22}, x_{112}, x_{122}, x_{222}, x$.

\begin{lemma}[{\cite[Theorem 4.2]{grant}}]\label{lemma:generatorst1t2}
    Let $t_1 = - x_{11}$ and $t_2 = - x$. Then \[\widetilde{\cO}_{J, \cO} \cong \Q[[t_1, t_2]],\] where $\widetilde{\cO}_{J, \cO}$ denotes the completion of $\cO_{J, \cO}$ with respect to its maximal ideal. 
\end{lemma}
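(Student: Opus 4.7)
The plan is to show that $t_1 = -x_{11}$ and $t_2 = -x$ form a regular system of parameters for the regular local ring $\cO_{J, \cO}$, after which the isomorphism $\widetilde{\cO}_{J, \cO} \cong \Q_p[[t_1, t_2]]$ follows immediately from Cohen's structure theorem.

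First, since $J$ is an abelian variety of dimension $2$ over $\Q_p$, it is smooth of dimension $2$ everywhere, so $\cO_{J, \cO}$ is a regular local ring of Krull dimension $2$ with residue field $\Q_p$. In particular $\dim_{\Q_p}(\mathfrak{m}/\mathfrak{m}^2) = 2$. The maximal ideal $\mathfrak{m}$ is generated by the eight dehomogenised coordinates $x_0, x_{11}, x_{12}, x_{22}, x_{112}, x_{122}, x_{222}, x$ of the embedding $J \subset \P^8$ at $\cO$. Showing that $t_1, t_2$ form a regular system of parameters therefore reduces to checking that their images in $\mathfrak{m}/\mathfrak{m}^2$ span a two-dimensional $\Q_p$-vector space, equivalently that the other six generators become $\Q_p$-linear combinations of $t_1$ and $t_2$ modulo $\mathfrak{m}^2$.

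To verify this, I would work through the defining equations of $J \subset \P^8$ given in Grant's Corollary 2.15. Dehomogenising by setting $X_{111} = 1$ turns each defining equation into a polynomial relation among $x_0, x_{11}, \ldots, x$ that vanishes at the origin. The linear parts of these relations cut out the cotangent space; by the dimension count above they must impose exactly six independent linear conditions on the eight generators. The concrete task is then to read off from these linear terms that the six conditions can be solved uniquely for $x_0, x_{12}, x_{22}, x_{112}, x_{122}, x_{222}$ in terms of $x_{11}$ and $x$, leaving $t_1 = -x_{11}$ and $t_2 = -x$ as a basis of $\mathfrak{m}/\mathfrak{m}^2$. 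This explicit bookkeeping through Grant's defining equations is the main concrete step, and the only real obstacle; once one records which monomials of degree one actually appear, the smoothness of $J$ guarantees that linear algebra forces the desired conclusion.

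Finally, with $t_1$ and $t_2$ identified as a regular system of parameters, Nakayama's lemma applied in $\widetilde{\cO}_{J, \cO}$ shows that they generate the maximal ideal of the completion. Cohen's structure theorem for complete regular local rings containing a coefficient field then identifies $\widetilde{\cO}_{J, \cO}$ with the formal power series ring $\Q_p[[t_1, t_2]]$, the regular system of parameters corresponding to the indeterminates.
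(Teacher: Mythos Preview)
The paper does not give its own proof of this lemma; it simply cites \cite[Theorem 4.2]{grant} and moves on. So there is no in-paper argument to compare against. Your proposed strategy---show that $t_1, t_2$ form a regular system of parameters for the $2$-dimensional regular local ring $\cO_{J,\cO}$ and then invoke Cohen's structure theorem---is the standard and correct route, and it is essentially what Grant does. The substantive work, as you correctly identify, is the explicit verification from the defining equations in \cite[Corollary 2.15]{grant} that the linear parts eliminate $x_0, x_{12}, x_{22}, x_{112}, x_{122}, x_{222}$ in favour of $x_{11}$ and $x$; Grant carries this out and records the resulting expansions (some of which the present paper quotes in \eqref{eq:coordinatefunctionsexpansions}). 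Your plan is sound and nothing is missing beyond that bookkeeping, which you have flagged.
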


In particular, we can find expansions for the coordinate functions $x_0, x_{ij}, x_{ijk}$ as power series in $\Q[[t_1, t_2]]$. In \cite[Theorem 4.2]{grant} it is shown that these expansions have coefficients in $\Z$. We have for example
\begin{equation}\label{eq:coordinatefunctionsexpansions}
\begin{aligned}
    x_0 &= t_1^3\left(-1 -f_2 t_1^2 + \sum_{i, j \geq 0, \; i + j \geq 4} \alpha_{ij}t_1^i t_2^j\right),\\
    x_{22} &= t_1\left(-2t_1 t_2 + \sum_{i, j \geq 0,\; i + j \geq 3} \beta_{ij}t_1^i t_2^j\right),\\
    x_{12} &= t_1\left( t_2^2 + \sum_{i, j \geq 0,\; i + j \geq 3} \gamma_{ij}t_1^i t_2^j\right), 
\end{aligned}
\end{equation}
with $\alpha_{ij}, \beta_{ij}, \gamma_{ij} \in \Z$. The description of $x_0$ is a refinement of the description in \cite[Theorem 4.2]{grant} which we need in \S\ref{sec:comparison}, see \cite[Lemma 1.6.16]{tripThesis} for a derivation. It follows that
\begin{align*}
     x_0^{-1} &= -t_1^{-3} \left(1 - f_2 t_1^2 + \sum_{i, j \geq 0, \; i + j \geq 4} \alpha_{ij}'t_1^i t_2^j\right)
\end{align*}
in $\operatorname{Frac}(\Q[[t_1, t_2]])$, with $\alpha_{ij}' \in \Z$.
Using this and the expansions in \eqref{eq:coordinatefunctionsexpansions}, we find
\begin{equation}\label{eq:coordinatefunctionsKummersurface}
    \begin{aligned}
        \wp_{22} &= x_{22}/x_0 = t_1^{-2}\left(2 t_1 t_2 + \sum_{i, j \geq 0,\; i + j \geq 3} \beta_{ij}' t_1^i t_2^j\right),\\
        -\wp_{12} &= -x_{12}/x_0 = t_1^{-2}\left(t_2^2 + \sum_{i, j \geq 0,\; i + j \geq 3} \gamma_{ij}'t_1^i t_2^j\right),\\
        \wp_{11} &= x_{11}/x_0 = t_1^{-2} \left(1 - f_2 t_1^2 + \sum_{i, j \geq 0, \; i + j \geq 4} \alpha_{ij}'t_1^i t_2^j\right)
    \end{aligned}
\end{equation}
in $\operatorname{Frac}(\Q[[t_1, t_2]])$, where $\beta_{ij}', \gamma_{ij}' \in \Z$.

We denote by $J_1(\Q_p)$ the model-dependent kernel of reduction modulo $p$ of $J(\Q_p)$. That is, we consider the integral model of $J$ as embedded in $\P^8$ described above, which can be found in \cite[Corollary 2.15]{grant}, and reduce the equations modulo $p$. This gives us a (possibly singular) variety $\tilde{J}$ in $\P^8$ over $\F_p$. We have a natural reduction map $\tilde{} \, \colon J(\Q_p) \ra \tilde{J}(\F_p)$, and define \[J_1(\Q_p) = \{P \in J(\Q_p) \mid \tilde{P} = \tilde{\cO}\},\] which is a finite index subgroup of $J(\Q_p)$ (see \cite[Chapter 7, \S 5]{casselsflynn}, which uses \cite[III, \S 6]{mattuck}). Note that when $P \in J_1(\Q_p)$, we have $t_1(P), t_2(P) \in p\Z_p$. It turns out there is a bijection (\cite[Corollary 4.5]{grant})
\begin{align*}
    \bt \colon J_1(\Q_p) &\ra p\Z_p \times p\Z_p\nonumber\\
    P &\mapsto (t_1(P), t_2(P)).
\end{align*}
This bijection induces a formal group structure on $p\Z_p \times p\Z_p$ (see \cite[Theorem 4.6]{grant}). More explicitly, we can define a pair of power series $\boldsymbol{F}_J(\bX, \bY)$ which is the group law of a $2$-parameter formal group $\hat{J}$ over $\Z_p$, in such a way that for all $P, Q \in J_1(\Q_p)$, we have
\begin{align*}
    \bF_J(\bt(P), \bt(Q)) &= \bt(P + Q).
\end{align*}
In other words, $\bt$ is a group homomorphism from $J_1(\Q_p)$ to the group $\hat{J}\left(p\Z_p \times p\Z_p\right)$ associated to the formal group ($\hat{J}, \bF_J)$. 

We note that $t_1= -2\frac{\wp_{11}}{\wp_{111}}$ and $t_2 = -\frac{\wp + f_3 \wp_{12} - f_1}{\wp_{111}}$ are odd functions, in the sense that $t_i(-P) = - t_i(P)$. This follows from the fact that $\wp_{11}$, $\wp_{12}$, and $\wp$ are even functions and $\wp_{111}$ is an odd function on $J$. This implies that the inverse on $\hat{J}$ is simply $\boldsymbol{i}_J(\bT) = -\bT$, and $\bF_J(-\bX, -\bY) = -\bF_J(\bX, \bY)$. Hence $\bF_J$ only has terms of total odd degree. The following result follows from Lemma \ref{lem:multipleofpninformalgroup}.

\begin{corollary}\label{cor:ordtipnP}
    Let $P \in J_1(\Q_p)$. For all $n \geq 0$ and for $i = 1, 2$ we have 
    \begin{equation*} 
        \ord_p(t_i(p^n P)) \geq n + 1.
    \end{equation*}
\end{corollary}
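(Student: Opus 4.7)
The plan is to deduce the corollary as a direct translation of Lemma \ref{lem:multipleofpninformalgroup} through the group homomorphism $\bt \colon J_1(\Q_p) \to \hat{J}((p\Z_p)^2)$ set up in \S\ref{sec:formalgroupjacobian}. The key observation is that $\bt$ converts multiplication by $p^n$ on the geometric side into the formal group operation $[p^n]$ on $\hat{J}$, reducing the statement to a purely formal assertion about iterated addition.

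More explicitly, first I would note that since $P \in J_1(\Q_p)$, the coordinates $(t_1(P), t_2(P)) = \bt(P)$ lie in $(p\Z_p)^2$. Since $\bt$ is a group homomorphism from $J_1(\Q_p)$ to the group $\hat{J}((p\Z_p)^2)$ associated to the formal group $(\hat{J}, \bF_J)$ over $\Z_p$, and the formal group homomorphism $[p^n]$ induces multiplication by $p^n$ on the associated group, we obtain the identity
\[
\bt(p^n P) = [p^n](\bt(P))
\]
in $(p\Z_p)^2$. Applying Lemma \ref{lem:multipleofpninformalgroup} to $(\hat{J}, \bF_J)$ with $\br = \bt(P)$ and $m = n$ yields $[p^n](\bt(P)) \in (p^{n+1}\Z_p)^2$. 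Reading off the two coordinates then gives $\ord_p(t_i(p^n P)) \geq n+1$ for $i=1,2$, which is exactly the claim.

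There is no real obstacle here: the content is entirely bundled into Lemma \ref{lem:multipleofpninformalgroup} and into the formal group structure constructed by Grant. The only point worth being careful about is the correspondence between the abstract formal group multiplication-by-$p^n$ map $[p^n]$ and the actual multiplication-by-$p^n$ on $J_1(\Q_p)$, which is ensured by $\bt$ being a group homomorphism.
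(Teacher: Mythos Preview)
Your proposal is correct and matches the paper's approach exactly: the paper simply states that the corollary follows from Lemma~\ref{lem:multipleofpninformalgroup}, and your argument spells out precisely why, via the group homomorphism $\bt$ identifying multiplication by $p^n$ on $J_1(\Q_p)$ with $[p^n]$ on $\hat{J}((p\Z_p)^2)$.
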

Furthermore, it follows from the identification with a formal group that $J_1(\Q_p)$ has trivial torsion \cite[Theorem 7.4.1, Corollary]{casselsflynn}.

\begin{remark}
    Grant restricts to genus 2 curves that can be given by an equation of the form $y^2 = f(x)$ with $f$ of degree 5, as we do here. For genus 2 curves given by an equation with $f$ of degree 6, a similar description of the Jacobian and the corresponding formal group is given by Flynn in \cite{flynn1}.
\end{remark}

\subsection{Local real heights}\label{sec:localrealheights}
To prove our main results, some of our arguments make use of local real height functions on $J$ as defined in \cite{uchida}. Using the theta divisor, we define the divisors
\begin{align*}
    \Theta_1 = 2\Theta, \quad \Theta_2 = \Theta_1 + \divv{\wp_{22}}, \quad  \Theta_3 = \Theta_1 + \divv{\wp_{12}}, \quad \Theta_4 = \Theta_1 + \divv{\wp_{11}}.
\end{align*}
We have $P \in \supp(\Theta_i)$ precisely when the $i$-th projective coordinate of $\kappa(P)$ vanishes. For $D \in \Div(J)$, we write \[J_D \colonequals J \setminus \supp(D).\]

\begin{definition}[{\cite[119]{uchida}}]\label{def:naivereallocalheightJ}
    Let $q$ be a prime. For $i \in \{1, 2, 3, 4\}$, we define the \term{naive local real height} $\lambda_{i, q} \colon J_{\Theta_i}(\Q_q) \ra \R$ as follows. For $P \in J_{\Theta_i}(\Q_q)$, we write $\kappa(P) = [k_1 : \cdots : k_4]$. Then
    $$\lambda_{i, q}(P) = \log \max_{1 \leq j \leq 4} \left|\frac{k_j}{k_i}\right|_q.$$
\end{definition}
We note that this definition is independent of the choice of homogeneous coordinates for $\kappa(P)$. If $P \in J_{\Theta_i}(\Q)$, we have $$\lambda_{i, q}(P) = - \log \left|w_i(P)\right|_q$$
for all $1 \leq i \leq 4$.

\begin{definition}[{\cite[119]{uchida}}]\label{def:canonicallocalrealheight}
    Let $q$ be a prime. For $i \in \{1, 2, 3, 4\}$, we define the \term{canonical local real height} $\hat{\lambda}_{i, q} \colon J_{\Theta_i}(\Q_q) \ra \R$ by $$\hat{\lambda}_{i, q}(P) = \lambda_{i,q}(P) + \sum_{n = 0}^\infty \frac{1}{4^{n+1}} \log \Phi_q(2^n P),$$
    where, with $\kappa(P) = [k_1 : \cdots : k_4]$, we define \[\Phi_q(P) = \frac{\max_{1 \leq j \leq 4} |\delta_j(k_1, \ldots, k_4)|_q}{\max_{1 \leq j \leq 4}|k_j|_q^4}.\]
    This is independent of the choice of homogeneous coordinates for $\kappa(P)$.
\end{definition}

For each prime $q$, let us consider the following subset of $J(\Q_q)$: \[U_q \colonequals \{P \in J(\Q_q) \mid \Phi_q(P) = 1\}.\] 
If $P \in J_{\Theta_i}(\Q)$, we have
\begin{equation}\label{eq:Phiq}
    \Phi_q(P) = \max_{1 \leq j \leq 4} |\delta_j(w(P))|_q.
\end{equation}

\begin{proposition}[{\cite[Theorem 4.1]{stollheight}, \cite[Lemma 1]{flynnsmart}}]\label{prop:Usubgroup}
    Let $q$ be a prime. Then
    \begin{enumerate}
        \item[(a)] \label{item1:Usubgroup}$U_q$ is a subgroup of $J(\Q_q)$ of finite index, and $J_1(\Q_q) \subseteq U_q$.
        \item[(b)] If $q$ is odd and $J/\Q$ has good reduction at $q$, then $U_q = J(\Q_q)$.
    \end{enumerate}
\end{proposition}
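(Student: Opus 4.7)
The plan is to treat (a) and (b) separately, with closure of $U_q$ under addition being the main algebraic obstacle and (b) following from good reduction via an étale-descent argument.

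For (a), I would start with the easy structural properties. The identity $\cO$ lies in $U_q$ because $\kappa(\cO) = [0:0:0:1]$ forces $\delta_4(x(\cO)) = \pm 1$ while the remaining $\delta_i$ vanish there. Closure under negation is immediate from $\kappa(-P) = \kappa(P)$. The containment $J_1(\Q_q) \subseteq U_q$ follows from the expansions in \eqref{eq:coordinatefunctionsKummersurface}: for $P \in J_1(\Q_q)$, after clearing the poles of $\wp_{11}, \wp_{12}, \wp_{22}$ at $\cO$, the first three Kummer coordinates lie in $q^2 \Z_q$ while the fourth is a $q$-adic unit, so the normalized vector $x(P)$ reduces to $(0,0,0,\pm 1)$ modulo $q$ and $\delta_4(x(P))$ remains a unit. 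The finite-index claim is then automatic, since $J_1(\Q_q)$ has finite index in $J(\Q_q)$ as the kernel of the reduction map into the finite group $\tilde{J}(\F_q)$.

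The essential difficulty in (a) is closure under addition, for which I would invoke the biquadratic forms from Theorem \ref{thm:biquadraticforms}. Setting $z = x(P+Q)$ and $w = x(P-Q)$ in normalized integer coordinates, the identity
\[
z_i w_j + w_i z_j = 2c\, B_{ij}(x(P), x(Q)), \quad B_{ij} \in \Z[k_1, \ldots, k_4, l_1, \ldots, l_4],
\]
compares the $q$-adic contents of $z, w$ with those of $x(P), x(Q)$. Following \cite{stollheight} and \cite{flynnsmart}, I would introduce the defect $\varepsilon_q(P) = -\ord_q \Phi_q(P) \in \Z_{\geq 0}$ and derive a parallelogram-type inequality
\[
\varepsilon_q(P+Q) + \varepsilon_q(P-Q) \leq 2\varepsilon_q(P) + 2\varepsilon_q(Q),
\]
by a careful accounting of the $q$-content of the normalizing scalar $c$ together with the integrality of the $B_{ij}$. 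Specializing to $P, Q \in U_q$ then forces $\varepsilon_q(P \pm Q) = 0$, so $U_q$ is closed under addition and is therefore a subgroup. The hardest step is the $q$-adic control of the scalar $c$, which is only pinned down up to the ambiguous normalizations of all four Kummer vectors and must be bounded in a way that rules out a simultaneous loss of $q$-content in both the sum and difference coordinate vectors.

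For (b), under good reduction at an odd prime $q$, the integral model of $J$ over $\Z_q$ is smooth and proper, and multiplication by $2$ on $J$ is étale because $q$ is odd. Descending through the Kummer quotient, the doubling map on the integral Kummer surface is defined everywhere, and in particular on its reduction $\tilde{K}$ over $\F_q$; equivalently, the polynomials $\delta_1, \ldots, \delta_4$ have no common zero on $\tilde{K}(\overline{\F}_q)$. Consequently, for any $P \in J(\Q_q)$ with normalized Kummer coordinates $x(P) \in \Z^4$, reducing modulo $q$ forces at least one $\delta_j(x(P))$ to be a $q$-adic unit, so $\Phi_q(P) = 1$ and $U_q = J(\Q_q)$.
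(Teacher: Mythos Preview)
The paper does not supply its own proof of this proposition: it is quoted directly from \cite[Theorem 4.1]{stollheight} and \cite[Lemma 1]{flynnsmart} with no argument beyond the citations. So there is no in-paper proof to compare against; your proposal is essentially a reconstruction of the arguments in those two references, and as such it is on the right track.

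A couple of remarks on the sketch itself. Your treatment of $\cO \in U_q$, closure under negation, the containment $J_1(\Q_q) \subseteq U_q$, and the finite-index consequence are all fine (for the containment you are implicitly using that $\delta_4(0,0,0,1)$ is a unit, which the paper records in Lemma~\ref{lemma:mucoef}). For closure under addition you correctly isolate the parallelogram inequality $\varepsilon_q(P+Q) + \varepsilon_q(P-Q) \leq 2\varepsilon_q(P) + 2\varepsilon_q(Q)$ as the key input, but you also correctly flag that controlling the scalar $c$ in Theorem~\ref{thm:biquadraticforms} is the genuine content, and you do not carry that step out. In Stoll's argument this is handled by working with \emph{all} the $B_{ij}$ simultaneously and using that the $4\times 4$ matrix $(B_{ij})$ is, up to a projective transformation over $\Z$, the product $z w^{t}$; the content estimate then comes from comparing the $q$-adic valuation of the maximum entry of this matrix with the product of the maxima of $z$ and $w$. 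Without that (or an equivalent device), the inequality is asserted rather than proved. Your argument for (b) via smoothness of the integral model and absence of common zeros of the $\delta_i$ on the reduced Kummer surface is exactly the argument in \cite{flynnsmart}.
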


\begin{corollary}\label{prop:lambdainUq}
    Let $P \in U_q \setminus \supp(\Theta_i)$ for some $i \in \{1, 2, 3, 4\}$. Then \[\hat{\lambda}_{i,q}(P) = \lambda_{i, q}(P).\] In particular, if $q$ is odd and $J$ has good reduction at $q$, this is true for all $P \in J_{\Theta_i}(\Q_q)$.
\end{corollary}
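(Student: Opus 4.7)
The statement is essentially an immediate consequence of Proposition \ref{prop:Usubgroup}(a) combined with the definition of $U_q$; no new ingredient is needed. The plan is to unwind the definition of $\hat{\lambda}_{i,q}$ and argue that every term of the correction series vanishes.

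First I would observe that $U_q$ is stable under the doubling map. Indeed, by Proposition \ref{prop:Usubgroup}(a), $U_q$ is a subgroup of $J(\Q_q)$, so if $P \in U_q$ then $2^n P \in U_q$ for every $n \geq 0$. By the defining condition of $U_q$ this gives $\Phi_q(2^n P) = 1$ for all such $n$, and hence $\log \Phi_q(2^n P) = 0$.

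Plugging this into the definition
\[
\hat{\lambda}_{i,q}(P) = \lambda_{i,q}(P) + \sum_{n=0}^{\infty} \frac{1}{4^{n+1}} \log \Phi_q(2^n P),
\]
the correction series collapses termwise to zero, yielding $\hat{\lambda}_{i,q}(P) = \lambda_{i,q}(P)$, which is the first claim. For the ``in particular'' part I would simply invoke Proposition \ref{prop:Usubgroup}(b): when $q$ is odd and $J$ has good reduction at $q$, one has $U_q = J(\Q_q)$, so $U_q \setminus \supp(\Theta_i) = J_{\Theta_i}(\Q_q)$ and the conclusion applies to every $P \in J_{\Theta_i}(\Q_q)$.

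There is essentially no obstacle here; the only subtlety worth double-checking is that the series in the definition of $\hat{\lambda}_{i,q}$ makes sense (so that the termwise vanishing argument is legitimate), but this is part of the content of Definition \ref{def:canonicallocalrealheight} as cited from \cite{uchida}, so no further work is required.
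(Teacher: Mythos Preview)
Your proposal is correct and matches the paper's approach: the paper states this as a corollary with no proof, treating it as immediate from the subgroup property in Proposition~\ref{prop:Usubgroup}(a) together with the definition of $U_q$ and $\hat{\lambda}_{i,q}$, and invoking Proposition~\ref{prop:Usubgroup}(b) for the ``in particular'' clause. Your write-up simply makes this implicit reasoning explicit.
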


The following theorem shown by Uchida gives relations between the canonical local height functions evaluated on sums of points, and the biquadratic forms $B_{ii}$ and multiplication-by-$m$ functions $\mu_m$.

\begin{theorem}[{\cite[Theorem 5.3, Theorem 5.6]{uchida}}]\label{thm:uchidalocalheights}
    Let $q$ be a prime. For $i \in \{1, 2, 3, 4\}$, let $\hat{\lambda}_{i,q}$ be the canonical local real height defined in Definition \ref{def:canonicallocalrealheight}.
    \begin{enumerate}[label=(\roman*), ref= \ref{thm:uchidalocalheights}(\roman*)]
        \item \label{item:uchida1} For any $m \in \Z_{>0}$ and $P \in J(\Q)$ with $P, mP \notin    \supp(\Theta_i)$, we have
        \begin{align*}
            \hat{\lambda}_{i, q}(mP) - m^2 \hat{\lambda}_{i, q}(P) = - \log \left|\frac{\mu_{m, i}(w(P))}{w_i(P)^{m^2}}\right|_q.
        \end{align*}

        \item \label{item:uchida2} For any $P, Q\in J(\Q)$ such that $P, Q, P+Q, P-Q \notin \supp(\Theta_i)$, we have
    \begin{align*}
        \hat{\lambda}_{i, q}(P + Q) + \hat{\lambda}_{i, q}(P - Q) - 2\hat{\lambda}_{i, q}(P) - 2\hat{\lambda}_{i, q}(Q) &= - \log \left| \frac{B_{ii}(w(P), w(Q))}{w_i(P)^2 w_i(Q)^2}\right|_q.
    \end{align*}
    \end{enumerate}
\end{theorem}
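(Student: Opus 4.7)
The strategy is a three-step approach: handle (i) for $m = 2$ by direct computation from the duplication formula, establish (ii) by a Tate-style telescoping argument, and deduce (i) for general $m$ by strong induction using the recurrences of Theorem \ref{thm:definitionmu}.

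For (i) with $m = 2$, fix a representative $(x_1(P),\ldots,x_4(P))$ of $\kappa(P)$ normalized so that $\max_j|x_j(P)|_q = 1$. Since $\mu_{2,i} = \delta_i$, the duplication identity $\kappa(2P) = [\delta_1(x(P)):\cdots:\delta_4(x(P))]$ combined with $\Phi_q(P) = \max_j|\delta_j(x(P))|_q$ gives $|x_i(2P)|_q = |\delta_i(x(P))|_q / \Phi_q(P)$, whence
\[
\lambda_{i,q}(2P) - 4\lambda_{i,q}(P) = \log \Phi_q(P) - \log\Bigl|\tfrac{\mu_{2,i}(x(P))}{x_i(P)^4}\Bigr|_q.
\]
Re-indexing the tail of the series $\sum_{n\geq 0} 4^{-(n+1)}\log\Phi_q(2^{n+1}P)$ appearing in $\hat{\lambda}_{i,q}(2P) - 4\hat{\lambda}_{i,q}(P)$ produces an extra $-\log\Phi_q(P)$ that cancels the first term on the right, giving (i) for $m = 2$.

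For (ii), fix normalized representatives $x, y, z, w$ of $\kappa(P), \kappa(Q), \kappa(P+Q), \kappa(P-Q)$. The biquadratic relation with $i = j$ gives $z_iw_i = c(P,Q)\,B_{ii}(x,y)$ for some $c(P,Q)\in\Q_q^\times$, so the naive analog of (ii) holds up to an error $-\log|c(P,Q)|_q$. Let $\psi(P,Q)$ denote the difference of the two sides of (ii); I would prove $\psi \equiv 0$ by establishing two properties and iterating. First, a scaling identity $\psi(2P, 2Q) = 4\psi(P, Q)$: applying the $m = 2$ case of (i) to each of $\hat{\lambda}_{i,q}(2(P\pm Q))$, $\hat{\lambda}_{i,q}(2P)$, $\hat{\lambda}_{i,q}(2Q)$ and using that $B_{ii}$ has bidegree $(2,2)$ so the $\Phi_q$ factors in $x(2P) = \delta(x(P))/\Phi_q(P)$ and $x(2Q) = \delta(x(Q))/\Phi_q(Q)$ cancel in the ratio $B_{ii}(x(2P),x(2Q))/(x_i(2P)^2 x_i(2Q)^2)$, the scaling identity reduces to a polynomial relation among $B_{ii}(\delta(x),\delta(y))$, $\delta_i(z)\delta_i(w)$, and $z_iw_i = c(P,Q)B_{ii}(x,y)$, itself extracted by applying the biquadratic relation at $(2P, 2Q)$. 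Second, boundedness of $\psi(P,Q)$ off $\supp(\Theta_i)$, which reduces to boundedness of $|c(P,Q)|_q$ and follows from compactness of the Kummer embedding together with integrality of the forms $B_{ij}$. These two properties force $\psi(P,Q) = 4^{-n}\psi(2^nP, 2^nQ) \to 0$ and conclude (ii).

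Part (i) for general $m$ is a strong induction. For even $m = 2m'$, apply the $m=2$ case at the point $m'P$ and combine with (i) for $m'$; since $\mu_{2m',i} = \delta_i(\mu_{m'})$ with $\delta_i$ of total degree $4$, any auxiliary normalization factor of $x(m'P)$ cancels and the combined logarithm becomes $-\log|\mu_{2m',i}(x(P))/x_i(P)^{(2m')^2}|_q$. For odd $m = 2m'+1$, apply (ii) to the pair $(m'+1)P,\, m'P$, whose sum and difference are $(2m'+1)P$ and $P$, and substitute the inductive formulas for $\hat{\lambda}_{i,q}(m'P)$ and $\hat{\lambda}_{i,q}((m'+1)P)$; the recurrence $\mu_{2m'+1,i}\,k_i = B_{ii}(\mu_{m'+1},\mu_{m'})$ consolidates the three logarithmic terms into a single one, and the identity $(2m'+1)^2 = 2(m'+1)^2 + 2(m')^2 - 1$ produces the correct coefficient of $\hat{\lambda}_{i,q}(P)$. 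The main obstacle is the scaling step in part (ii): making it rigorous requires a concrete polynomial identity linking the $\delta_i$ with the $B_{ij}$ that holds on the nose rather than only projectively, together with the uniform bound on $|c(P,Q)|_q$. I expect the bulk of the work to live in this identity; once it is in hand, the Tate iteration and the inductive bookkeeping are routine.
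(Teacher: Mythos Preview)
The paper does not prove this theorem; it is quoted from \cite[Theorem 5.3, Theorem 5.6]{uchida} and used as a black box. So there is no proof in the paper to compare your proposal against, and what follows is an assessment of your argument on its own terms.

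Your treatment of (i) for $m=2$ is correct, and the strong induction deducing (i) for general $m$ from (ii) together with the $m=2$ case is sound: the degree bookkeeping with $\mu_{2m,i}=\delta_i(\mu_m)$ and $\mu_{2m+1,i}k_i=B_{ii}(\mu_{m+1},\mu_m)$ goes through exactly as you describe.

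The genuine gap is in (ii). You yourself flag that the scaling identity $\psi(2P,2Q)=4\psi(P,Q)$ rests on a ``concrete polynomial identity linking the $\delta_i$ with the $B_{ij}$ that holds on the nose rather than only projectively,'' which you have not produced and which you expect to be ``the bulk of the work.'' That is an honest acknowledgment that the proof is incomplete at its central step. Two further points compound the gap. First, the Tate iteration $\psi(P,Q)=4^{-n}\psi(2^nP,2^nQ)$ requires $2^nP$, $2^nQ$, $2^n(P\pm Q)$ to stay off $\supp(\Theta_i)$ for every $n$, which is not automatic; you would need to observe that $\psi$ is in fact independent of $i$ (since $\hat\lambda_{i,q}-\lambda_{i,q}$ does not depend on $i$) and switch $i$ along the sequence, or argue by continuity on a dense set. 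Second, bounding $|c(P,Q)|_q$ away from both $0$ and $\infty$ does not follow from compactness and integrality of the $B_{ij}$ alone: integrality gives one direction, but the other requires knowing that for every pair $(x,y)$ on the Kummer surface some $B_{ij}(x,y)$ is a $q$-adic unit, which is a nontrivial statement about common zeros of the $B_{ij}$.

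For comparison, Uchida's route in the cited paper goes through the general theory of N\'eron functions: once $\hat\lambda_{i,q}$ is identified as the canonical local height attached to $\Theta_i$, the identities in (i) and (ii) follow from the divisor relations $[m]^*\Theta_i - m^2\Theta_i = \operatorname{div}(\mu_{m,i}/k_i^{m^2}\circ\kappa)$ and the analogous relation for $B_{ii}$, together with the uniqueness of N\'eron functions up to constants. That machinery sidesteps the explicit polynomial identity you are searching for.
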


\begin{remark}
    Uchida states this theorem more generally for $\Q_p$-rational points. We will only use it for points in $J(\Q)$, so we phrased it here in terms of the normalisations $w(P)$ and $w(Q)$ of the coordinates of $\kappa(P)$ and $\kappa(Q)$, respectively.
\end{remark}

\section{Naive \texorpdfstring{$p$}{p}-adic height function}\label{sec:naiveheight}
We define a naive $p$-adic height function on the following finite index subgroup of $J(\Q)$:
\begin{equation}
    J_p \colonequals J(\Q) \cap J_1(\Q_p) \cap \left( \bigcap_{q \; \text{prime}} U_q \right).\label{eq:defJp}
\end{equation}
We note that $J_p$ is a subgroup of finite index in $J(\Q)$ because it is a finite intersection of subgroups of finite index (by Proposition \ref{prop:Usubgroup}, $U_q \cap J(\Q) = J(\Q)$ for all but finitely many $q$).

\begin{definition}\label{def:Hp}
    Let $p$ be an odd prime, and let $J_p$ be as above. We define a naive $p$-adic height on $J_p$ as follows:
    \begin{equation}\label{eq:naivepadicheightJ}
        \begin{aligned}
            H_p \colon J_p &\rightarrow \Q_p\\
        H_p(P) &= \log_p(w_4(P)),
        \end{aligned}
    \end{equation}
    where $w_4(P)$ is the fourth coordinate of $\kappa(P)$ in the normalisation defined in \eqref{eq:normalizationkappa}.
\end{definition}
Note that $J_1(\Q_p) \subseteq J_{\Theta_4}(\Q_p)$, because for $P \in J_1(\Q_p)$, we have that $\kappa(P)$ reduces to $\kappa(\cO) = [0:0:0:1]$ modulo $p$, and hence $w_4(P) \neq 0$. This ensures that $H_p$ is well-defined.

\begin{remark}
    Note the similarity between the formula for $H_p$ in \eqref{eq:naivepadicheightJ} and the formula for the naive $p$-adic height on elliptic curves in \eqref{eq:naivepadicheightE} defined by Perrin-Riou. Because $\kappa_E(\cO) = [1 : 0]$, we have for $P \in E_p(\Q)$ that $w_1(P)$ is the coordinate of $\kappa_E(P)$ with the highest $p$-adic absolute value. Similarly, for $P \in J_p$, we have that $w_4(P)$ is the coordinate of $\kappa(P)$ with the highest $p$-adic absolute value. So both $H_2$ and $H_p$ are defined by normalising the Kummer coordinates of a point in such a way that they are coprime integers, and taking the $p$-adic logarithm of the coordinate with the highest $p$-adic absolute value.
    
    We can go on to compare with the standard naive real height, both on elliptic curves and the Jacobians of genus two curves. It is defined as 
    $H(P) = \log \max_i\, |w_i(P)|_\infty$, where $i$ runs over the Kummer coordinates of $P$ in the normalisation described above. Again we take the coordinate with the highest absolute value, in this case at the place $\infty$ (because we consider the real height). In both the real and $p$-adic case, it can be observed that the formula for the naive height arises from applying an id\`{e}le class character and taking the $p$ or $\infty$ component, respectively. In the $p$-adic case, we use the cyclotomic $p$-adic id\`{e}le class character whose component at $p$ is $x \mapsto \log_p(x)$ (see \cite[\S 2.1]{BBBM}). In the real case, we use the real-valued id\`{e}le class character whose component at $\infty$ is $x \mapsto \log |x|_\infty$.
\end{remark}

Our main result, Theorem \ref{thm:mainthm}, says that we can use the naive height $H_p$ to define a quadratic height on $J(\Q)$. We now prove the first part of this result.

\begin{theorem}\label{thm:existencelimit}
Let $P \in J_p$, with $J_p$ as in \eqref{eq:defJp}. Then the following limit exists:
\[h_p(P) \colonequals \lim_{n \ra \infty} \frac{1}{p^{2n}} H_p(p^n P).
\]
\end{theorem}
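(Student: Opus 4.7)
The aim is to show the sequence $a_n \coloneqq p^{-2n} H_p(p^n P)$ is $p$-adically Cauchy; the limit then exists in $\Q_p$ by completeness. The plan splits into an algebraic identity for the one-step defect, a global normalisation argument, and a formal-group estimate.

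Setting $Q = p^n P$, I would first derive an explicit formula for $H_p(pQ) - p^2 H_p(Q)$. By Theorem \ref{thm:definitionmu}, the normalised integer tuples $(x_i(pQ))_i$ and $(\mu_{p,i}(x(Q)))_i$ are proportional over $\Q^\times$, say by a factor $c_Q$. To pin down $c_Q = \pm 1$, I apply Uchida's Theorem \ref{thm:uchidalocalheights}(i) with $m = p$ and $i = 4$ at every prime $q$: the inclusion $J_p\subseteq U_q$ built into the definition of $J_p$ lets me invoke Corollary \ref{prop:lambdainUq} to replace canonical by naive local heights on both $Q$ and $pQ$, so that Uchida's identity collapses to $|x_4(pQ)|_q = |\mu_{p,4}(x(Q))|_q$ at every finite prime. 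The product formula then forces $c_Q = \pm 1$. Using $\log_p(\pm 1) = 0$ and the homogeneity of $\mu_{p,4}$,
\[
H_p(pQ) - p^2 H_p(Q) \;=\; \log_p\!\bigl(\mu_{p,4}(\epsilon_1,\epsilon_2,\epsilon_3,1)\bigr),\qquad \epsilon_i \coloneqq x_i(Q)/x_4(Q).
\]

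Next I would estimate the defect via the formal group parameters. Corollary \ref{cor:ordtipnP} gives $\ord_p(t_i(Q))\ge n+1$, and the expansions \eqref{eq:coordinatefunctionsKummersurface} show $\epsilon_1 = t_1^2 + O(t^4)$, $\epsilon_2 = -2t_1 t_2 + O(t^4)$, $\epsilon_3 = t_2^2 + O(t^4)$, so $\ord_p(\epsilon_i(Q))\ge 2(n+1)$ for $i = 1,2,3$. Since $\kappa(\cO) = [0:0:0:1]$ forces $\mu_{p,4}(0,0,0,1) = 1$ and $\mu_{p,4}$ has $\Z$-coefficients, this yields $\mu_{p,4}(\epsilon,1)\in 1 + p^{2(n+1)}\Z_p$, and \eqref{eq:logarithm} bounds the defect in $p^{2(n+1)}\Z_p$.

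The main obstacle is promoting this to genuine Cauchy-ness of $(a_n)$: the bare estimate only gives $a_{n+1} - a_n \in \Z_p$. I expect the necessary refinement to come from the fact that $[p](\bT) = p\bT + O(\bT^3)$ on $\hat J$, which, via the projective identity $\epsilon_i(pQ) = \mu_{p,i}(\epsilon(Q),1)/\mu_{p,4}(\epsilon(Q),1)$, forces the linear-in-$\epsilon$ part of $\mu_{p,4}(\epsilon,1) - 1$ to carry additional $p$-divisibility (morally because $\kappa(pQ)$ approaches $\kappa(\cO)$ by a factor of $p^2$ faster in Kummer coordinates than $\kappa(Q)$ does), while the evenness of the formal group forces higher-degree contributions to sit in strictly deeper powers of $p$. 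Combining this with the filtration from Corollary \ref{cor:ordtipnP} should give $\ord_p(a_{n+1} - a_n)\to \infty$, from which Cauchy-ness and the existence of the limit $h_p(P)\in\Q_p$ follow.
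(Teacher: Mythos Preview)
Your first two steps are correct and essentially coincide with the paper: the normalisation $x_i(mP)=\pm\mu_{m,i}(x(P))$ via Uchida's identity and Corollary~\ref{prop:lambdainUq} is exactly Proposition~\ref{prop:muxcoordinates}, and your reduction to $H_p(pQ)-p^2H_p(Q)=\log_p\bigl(\mu_{p,4}(\epsilon_1,\epsilon_2,\epsilon_3,1)\bigr)$ together with $\ord_p(\epsilon_i(Q))\ge 2(n+1)$ matches the paper's set-up in \eqref{eq:limitresult}--\eqref{eq:muexpansionatP}.

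The genuine gap is in the final paragraph. You correctly note that the crude bound $\mu_{p,4}(\epsilon,1)\in 1+p^{2(n+1)}\Z_p$ only gives $a_{n+1}-a_n\in\Z_p$, which is not Cauchy. But your proposed repair is aimed at the wrong target. The identity $\epsilon_i(pQ)=\mu_{p,i}(\epsilon(Q),1)/\mu_{p,4}(\epsilon(Q),1)$ together with $[p](\bT)=p\bT+O(\bT^3)$ constrains the linear-in-$\epsilon$ part of $\mu_{p,i}$ for $i=1,2,3$ (it must be $p^2\epsilon_i$), yet it says nothing about the linear-in-$\epsilon$ part of $\mu_{p,4}$ itself: the denominator contributes only at quadratic order to those ratios. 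So no extra $p$-divisibility on the linear part of $\mu_{p,4}(\epsilon,1)-1$ is forced this way.

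What actually closes the argument is a purely algebraic fact about the polynomial $\mu_{m,4}$, requiring no formal-group input: by induction on the recursion in Theorem~\ref{thm:definitionmu}, using that $\delta_4\in k_4^4+(k_1,k_2,k_3)^2\Z[k_j]$ and $B_{44}\in k_4^2l_4^2+(k_1,k_2,k_3)(l_1,l_2,l_3)\Z[k_j,l_j]$ from the explicit formulas, one finds
\[
\mu_{m,4}\in k_4^{m^2}+(k_1,k_2,k_3)^2\,\Z[k_1,k_2,k_3,k_4]
\]
(this is Lemma~\ref{lemma:mucoef}(ii)). In particular there is \emph{no} linear-in-$(k_1,k_2,k_3)$ contribution at all. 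Substituting your $\epsilon_i$ then gives $\mu_{p,4}(\epsilon,1)\in 1+p^{4(n+1)}\Z_p$, hence $\ord_p(a_{n+1}-a_n)\ge 2(n+1)-2\to\infty$, and the sequence is Cauchy. The paper packages this as $u_p(t_1,t_2)\in 1+(t_1,t_2)^4\Z[[t_1,t_2]]$ (Proposition~\ref{prop:formoffunction}) and then invokes the general convergence Lemma~\ref{lemma:mostgenerallemma}.
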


Note that the definition of $H_p$ involves the normalisation $w(P)$ of $\kappa(P)$. As a first step in proving Theorem \ref{thm:existencelimit}, we want to express $w(mP)$ in terms of $w(P)$ for $m \in \Z_{>0}$. It follows from \eqref{eq:Phiq} that for $P \in J_p$, we have $w_i(2P) = \pm \delta_i(w(P))$ for $i \in \{1, 2, 3, 4\}$. We use Theorem \ref{item:uchida1} to obtain an analogous result for arbitrary multiples of $P$.

\begin{proposition}\label{prop:muxcoordinates}
    Let $P \in J_p$. Then $w_i(mP) = \pm \mu_{m, i}(w(P))$ for $i \in \{1, 2, 3, 4\}$ and all $m \in \Z_{>0}$.
\end{proposition}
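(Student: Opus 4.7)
The plan is to leverage Theorem~\ref{thm:definitionmu}, which gives the projective identity $\kappa(mP) = [\mu_{m,1}(x(P)) : \mu_{m,2}(x(P)) : \mu_{m,3}(x(P)) : \mu_{m,4}(x(P))]$. Since $\mu_{m,i}$ has integer coefficients and $x(P)$ consists of coprime integers, each $\mu_{m,i}(x(P))$ lies in $\Z$. Likewise, $(x_1(mP), \ldots, x_4(mP))$ is a tuple of coprime integers. Projective equality then forces a rational constant $c \in \Q^\times$ such that $x_i(mP) = c \cdot \mu_{m,i}(x(P))$ simultaneously for every $i$. The whole proposition reduces to showing $c = \pm 1$, i.e.\ that $|c|_q = 1$ for every prime $q$.

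To pin down $|c|_q$ one needs to pick an index $i_0$ for which both $x_{i_0}(P)$ and $x_{i_0}(mP)$ are nonzero, so that the ratio $x_{i_0}(mP)/\mu_{m,i_0}(x(P))$ makes sense and genuinely equals $c$. The natural choice is $i_0 = 4$: since $P \in J_p \subseteq J_1(\Q_p)$ and $J_1(\Q_p)$ is a subgroup, we have $mP \in J_1(\Q_p)$ as well, so both $\kappa(P)$ and $\kappa(mP)$ reduce mod $p$ to $\kappa(\cO) = [0:0:0:1]$. In particular, $x_4(P), x_4(mP) \in \Z \setminus p\Z$, so $P, mP \notin \supp(\Theta_4)$, and we have a well-defined relation $c = x_4(mP)/\mu_{m,4}(x(P))$.

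To compute $|c|_q$ at an arbitrary prime $q$, I apply Theorem~\ref{thm:uchidalocalheights}(i) with $i=4$:
\[
\hat{\lambda}_{4,q}(mP) - m^2\hat{\lambda}_{4,q}(P) = -\log\left|\frac{\mu_{m,4}(x(P))}{x_4(P)^{m^2}}\right|_q.
\]
Since $P \in J_p$, we have $P \in U_q$ for every prime $q$, and hence $mP \in U_q$ too (Proposition~\ref{prop:Usubgroup}). Corollary~\ref{prop:lambdainUq} then collapses each canonical local height on the left to the corresponding naive one: $\hat{\lambda}_{4,q}(P) = -\log|x_4(P)|_q$ and $\hat{\lambda}_{4,q}(mP) = -\log|x_4(mP)|_q$. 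Substituting and cancelling the $x_4(P)^{m^2}$ terms yields $|x_4(mP)|_q = |\mu_{m,4}(x(P))|_q$, hence $|c|_q = 1$. This holds for every prime $q$, forcing $c = \pm 1$ and completing the argument.

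The main obstacle is conceptual rather than computational: Theorem~\ref{thm:uchidalocalheights}(i) is a statement about canonical local heights, which only coincide with the naive local heights $-\log|x_i(\cdot)|_q$ on $U_q$. This is precisely why $J_p$ is defined as the intersection with \emph{every} $U_q$ in \eqref{eq:defJp}; without this hypothesis the correction term from $\Phi_q$ in Definition~\ref{def:canonicallocalrealheight} would obstruct the reduction to plain absolute values. Everything else (rationality of $c$, nonvanishing of $x_4$, and the final gcd argument) follows immediately from the setup in \S\ref{sec:preliminaries}.
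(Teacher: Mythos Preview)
Your proof is correct and follows essentially the same approach as the paper: both arguments apply Theorem~\ref{thm:uchidalocalheights}(i) with $i=4$, invoke Corollary~\ref{prop:lambdainUq} (valid because $P, mP \in U_q$ for every prime $q$) to replace the canonical local heights by naive ones, deduce $|x_4(mP)|_q = |\mu_{m,4}(x(P))|_q$ for all $q$, and then use the projective identity from Theorem~\ref{thm:definitionmu} to conclude for all coordinates. The only cosmetic difference is that you phrase the endgame via an explicit proportionality constant $c \in \Q^\times$ with $|c|_q = 1$, whereas the paper goes directly from the absolute-value equality to $x_4(mP) = \pm\mu_{m,4}(x(P))$.
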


\begin{proof}
    Consider any prime $q$ and any $m \in \Z_{>0}$. Recall that because $P, mP \in J_p$, we have $w_4(P), w_4(mP) \neq 0$ and hence $P, mP \notin \supp(\Theta_4)$. We make use of the canonical local real height functions introduced in Definition \ref{def:canonicallocalrealheight}. We apply Theorem \ref{item:uchida1} to obtain
    \begin{align}
        \hat{\lambda}_{4, q}(m P) - m^2 \hat{\lambda}_{4, q}(P)
        &= - \log  \left| \mu_{m, 4}(w(P))\right|_q + m^2 \log \left| w_4(P) \right|_q. \label{eq:coprime1}
    \end{align}
    On the other hand, because $P, m P \in U_q$, we have from Proposition \ref{prop:lambdainUq} that
    \begin{equation}\label{eq:coprime2}
    \begin{aligned}
        \hat{\lambda}_{4, q}(m P) - m^2 \hat{\lambda}_{4, q}(P) &= \lambda_{4, q}(m P) - m^2 \lambda_{4, q}(P)\\
        &= - \log \left|w_4(m P)\right|_q + m^2 \log \left|w_4(P)\right|_q. 
    \end{aligned}
    \end{equation}
    Equating \eqref{eq:coprime1} and \eqref{eq:coprime2} then gives
    \begin{align*}
        - \log \left|w_4(m P)\right|_q &= - \log \left|\mu_{m, 4}(w(P))\right|_q.
    \end{align*}
    Hence, $\left|w_4(m P)\right|_q = \left|\mu_{m, 4}(w(P))\right|_q$ for all primes $q$. Because ${w_4(m P), \mu_{m, 4}(w(P)) \in \Z}$, this implies that $w_4(m P) = \pm \mu_{m, 4}(w(P))$. We have
    \begin{align*}
        \kappa(m P) &= [w_1(m P) : w_2(m P) : w_3(m P) : w_4(m P)]\\
        &= [\mu_{m, 1}(w(P)): \mu_{m, 2}(w(P)): \mu_{m, 3}(w(P)): \mu_{m, 4}(w(P))],  
    \end{align*}
    so this implies that $w_i(m P) = \pm \mu_{m, i}(w(P))$ for all $1 \leq i \leq 4$. 
\end{proof}

In particular, we deduce that \[\gcd(\mu_{m, 1}(w(P)), \mu_{m, 2}(w(P)), \mu_{m, 3}(w(P)), \mu_{m, 4}(w(P))) = 1\] for all $m \geq 0$. 

\begin{corollary}
    Let $P \in J_p$. Then \[H_p(mP) = \log_p(\mu_{m, 4}(w(P)))\] for all $m \geq 1$.
\end{corollary}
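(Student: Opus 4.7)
The plan is to deduce this immediately from Proposition \ref{prop:muxcoordinates} together with the standard properties of the Iwasawa logarithm $\log_p$. By definition, $H_p(mP) = \log_p(x_4(mP))$, and the proposition already identifies $x_4(mP) = \pm \mu_{m,4}(x(P))$, so essentially only the sign ambiguity needs to be dealt with.

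First I would check that the expression on the right-hand side makes sense. Since $J_p$ is a subgroup of $J(\Q)$, we have $mP \in J_p$; in particular $mP \in J_1(\Q_p) \subseteq J_{\Theta_4}(\Q_p)$, so $x_4(mP) \neq 0$, and consequently $\mu_{m,4}(x(P)) \neq 0$ by Proposition \ref{prop:muxcoordinates}. So both logarithms are defined on nonzero integers.

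Next, I would invoke the fact that the Iwasawa $p$-adic logarithm $\log_p \colon \Q_p^\times \to \Q_p$ satisfies $\log_p(-1) = 0$ (since $2\log_p(-1) = \log_p(1) = 0$ and $\Q_p$ has characteristic zero), and is a group homomorphism. Hence $\log_p(-a) = \log_p(a)$ for every $a \in \Q_p^\times$, which kills the sign ambiguity coming from the normalisation in \eqref{eq:normalizationkappa}. Combining with Proposition \ref{prop:muxcoordinates} yields
\[
H_p(mP) = \log_p(x_4(mP)) = \log_p(\pm \mu_{m,4}(x(P))) = \log_p(\mu_{m,4}(x(P))),
\]
as required. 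There is no real obstacle here; the statement is a direct consequence of the proposition and the sign-invariance of $\log_p$, and is being recorded mainly for use in the subsequent analysis of the limit defining $h_p$.
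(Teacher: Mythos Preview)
Your proof is correct and matches the paper's approach: the paper does not give a proof of this corollary, treating it as immediate from Proposition~\ref{prop:muxcoordinates} together with the remark after \eqref{eq:normalizationkappa} that the sign ambiguity in $x_i(P)$ is irrelevant under $\log_p$. Your explicit verification that both sides are well-defined and that $\log_p(-1)=0$ kills the sign is exactly the reasoning the paper leaves implicit.
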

 
To prove Theorem \ref{thm:existencelimit}, we use that $\Q_p$ is a complete metric space with a non-archimedean absolute value, so that we can instead show that the sequence inside the limit is Cauchy by proving that
\begin{align}
    \lim_{n \ra \infty}  \frac{1}{p^{2(n+1)}} \left( H_p(p^{n+1}P) - p^2 H_p(p^nP)\right) = \lim_{n \ra \infty}  \frac{1}{p^{2(n+1)}} \log_p \left( \frac{\mu_{p, 4}(w(p^nP))}{w_4(p^nP)^{p^2}} \right) = 0.\label{eq:limitresult}
\end{align}

Note using \eqref{eq:kummermap} that for $Q \in J_{\Theta_4}(\Q)$, we have 
\begin{align*}
    \frac{\mu_{p, 4}(w(Q))}{w_4(Q)^{p^2}} = \mu_{p, 4}\left(\frac{1}{\wp_{11}}(Q), \frac{\wp_{22}}{\wp_{11}}(Q), -\frac{\wp_{12}}{\wp_{11}}(Q), 1\right).
\end{align*}

For any $m \geq 1$, we can consider $\mu_{m, 4}\left(\frac{1}{\wp_{11}}, \frac{\wp_{22}}{\wp_{11}}, -\frac{\wp_{12}}{\wp_{11}}, 1\right)$ as an element of the function field $\Q(J) \subseteq \operatorname{Frac}(\Q[[t_1, t_2]])$. Using \eqref{eq:coordinatefunctionsKummersurface}, we find
\begin{equation}\label{eq:fractionalexpansions}
    \begin{aligned}
        \frac{1}{\wp_{11}} &= t_1^2 + (\text{terms of total degree $\geq 3$})\\
        \frac{\wp_{22}}{\wp_{11}} &= 2 t_1 t_2 +  (\text{terms of total degree $\geq 3$})\\
        -\frac{\wp_{12}}{\wp_{11}} &= t_2^2 + (\text{terms of total degree $\geq 3$}) 
    \end{aligned}
\end{equation}

which are all in $\Z[[t_1, t_2]]$. To find properties of the expansion of $\mu_{m, 4}\left(\frac{1}{\wp_{11}}, \frac{\wp_{22}}{\wp_{11}}, -\frac{\wp_{12}}{\wp_{11}}, 1\right)$ in terms of $t_1$ and $t_2$, we look more closely at the coefficients of the polynomials $\mu_{m,i}$. The following observation is a refinement of \cite[Lemma 3.9]{uchida}.

\begin{lemma}\label{lemma:mucoef}
    For $m \geq 1$, we have \[\mu_{m, 4}(k_1, k_2, k_3, k_4) \in k_4^{m^2} + (k_1, k_2, k_3)^2\Z[k_1, k_2, k_3, k_4].\]
\end{lemma}

\begin{proof}
    From the definition in Theorem \ref{thm:definitionmu}, we know that $\mu_{1, 4} = k_4$ satisfies the statement. To show the statement for arbitrary $m \geq 1$, we use an inductive argument using the definition of $\mu_m$. From \cite[Lemma 3.9]{uchida}, we obtain that  
    \begin{equation}\label{eq:uchida3.9}
        \mu_{m, i}(k_1, k_2, k_3, k_4) \in (k_1, k_2, k_3) \Z[k_1, k_2, k_3, k_4] \ \text{for} \ i = 1, 2, 3 \ \text{and} \ m \geq 0.
    \end{equation}
     From the equations for $\delta$ in \cite[Appendix C]{flynn2}, we see that \[\delta_4 \in k_4^4 + (k_1, k_2, k_3)^2\Z[k_j].\] Using \eqref{eq:uchida3.9}, it follows that if $\mu_{m, 4}$ satisfies the statement of the lemma, then so does $\mu_{2m, 4} = \delta_4(\mu_m)$.
    Furthermore, we see from the equations for the biquadratic forms $B_{ij}$ in \cite{biquadratic} that \[B_{44} \in k_4^2l_4^2 + (k_1, k_2, k_3)(l_1, l_2, l_3)\Z[k_j, l_j].\] Again, using \eqref{eq:uchida3.9}, it follows that if $\mu_{m, 4}$ and $\mu_{m+1, 4}$ satisfy the statement of the lemma, then so does $\mu_{2m+1, 4}$. For more details, see \cite[Lemma 3.3.7, Lemma 3.3.8]{tripThesis}.
\end{proof}

This leads to the following observation.

\begin{proposition}\label{prop:formoffunction}
    Let $m \geq 1$. Then $\mu_{m, 4}\left(\frac{1}{\wp_{11}}, \frac{\wp_{22}}{\wp_{11}}, -\frac{\wp_{12}}{\wp_{11}}, 1\right)$ has an expansion in $t_1$ and $t_2$ of the form $u_m(t_1, t_2)$ for some \[u_m \in 1 + (t_1, t_2)^4\Z[[t_1, t_2]].\]
\end{proposition}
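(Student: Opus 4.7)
The proposal is to obtain the statement as a direct consequence of Lemma \ref{lemma:mucoef}(ii) together with the explicit expansions in \eqref{eq:fractionalexpansions}. The key observation is that the polynomial $\mu_{m,4}$ has a very controlled structure: its "constant in $k_4$" part vanishes to order at least two in the remaining three variables, and each of the three rational functions we substitute already vanishes to order two in $(t_1, t_2)$.

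More precisely, the first step is to invoke Lemma \ref{lemma:mucoef}(ii) to write
\[
\mu_{m,4}(k_1, k_2, k_3, k_4) = k_4^{m^2} + g_m(k_1, k_2, k_3, k_4),
\]
where $g_m \in (k_1, k_2, k_3)^2 \Z[k_1, k_2, k_3, k_4]$. Setting $k_4 = 1$ kills the $k_4^{m^2}$ term nicely, leaving $1 + g_m(k_1, k_2, k_3, 1)$, where every monomial in $g_m(k_1, k_2, k_3, 1)$ has combined degree at least two in $k_1, k_2, k_3$.

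The second step is to substitute
\[
k_1 = \tfrac{1}{\wp_{11}}, \quad k_2 = \tfrac{\wp_{22}}{\wp_{11}}, \quad k_3 = -\tfrac{\wp_{12}}{\wp_{11}},
\]
and to observe from \eqref{eq:fractionalexpansions} that each of these three rational functions lies in $(t_1, t_2)^2\Z[[t_1, t_2]]$ (with leading monomials $t_1^2$, $2t_1 t_2$, and $t_2^2$, respectively). Hence any product of two or more of them lies in $(t_1, t_2)^4 \Z[[t_1, t_2]]$, so every monomial coming from $g_m$ contributes to $(t_1, t_2)^4 \Z[[t_1, t_2]]$. Summing these contributions (a finite sum, since $g_m$ is a polynomial) yields
\[
u_m(t_1, t_2) = 1 + g_m\!\left(\tfrac{1}{\wp_{11}}, \tfrac{\wp_{22}}{\wp_{11}}, -\tfrac{\wp_{12}}{\wp_{11}}, 1\right) \in 1 + (t_1, t_2)^4 \Z[[t_1, t_2]],
\]
as desired.

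There is no serious obstacle: the whole proposition is essentially a bookkeeping exercise once Lemma \ref{lemma:mucoef} is in hand. The only point that deserves a small sanity check is that the substitution is well-defined in $\Z[[t_1, t_2]]$ (not merely in $\operatorname{Frac}(\Z[[t_1, t_2]])$), but this is guaranteed by \eqref{eq:fractionalexpansions}, which gives integral power series expansions for the three relevant rational functions. The integrality of the coefficients of $u_m$ follows from the fact that $g_m$ has integer coefficients and each substituted series lies in $\Z[[t_1, t_2]]$.
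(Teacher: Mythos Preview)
Your proposal is correct and follows essentially the same approach as the paper's own proof: invoke Lemma \ref{lemma:mucoef}(ii) to write $\mu_{m,4} \in k_4^{m^2} + (k_1,k_2,k_3)^2\Z[k_1,k_2,k_3,k_4]$, then substitute the three rational functions from \eqref{eq:fractionalexpansions}, each of which lies in $(t_1,t_2)^2\Z[[t_1,t_2]]$. Your write-up is simply more detailed than the paper's terse version, but the argument is the same.
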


\begin{proof}
    Recall that $\mu_{m,4}$ is a homogeneous polynomial of degree $m^2$. We know from Lemma \ref{lemma:mucoef} that $\mu_{m, 4} \in k_4^{m^2} + (k_1, k_2, k_3)^2\Z[k_1, k_2, k_3, k_4]$. From the expansions in \eqref{eq:fractionalexpansions}, we note that $\frac{1}{\wp_{11}}, \frac{\wp_{22}}{\wp_{11}}, -\frac{\wp_{12}}{\wp_{11}} \in (t_1, t_2)^2\Z[[t_1, t_2]]$. This implies the result.
\end{proof}

For $Q \in J_1(\Q_p) \cap J(\Q)$, we then have
\begin{align}
    u_m(t_1(Q), t_2(Q)) &= \frac{\mu_{m, 4}(w(Q))}{w_4(Q)^{m^2}}.\label{eq:muexpansionatP}
\end{align}

To arrive at the limit result in \eqref{eq:limitresult}, we need a general convergence lemma, which is a generalisation of \cite[Lemme part 1]{perrinriou2}. 

\begin{lemma}\label{lemma:mostgenerallemma}
    Let $\bT = (T_1, \ldots, T_r)$ and $g(\bT) \in 1 + (T_1, \ldots, T_r)^k \Q_p[[\bT]]$ for some $k \in \Z_{>0}$, such that $g$ converges on some neighborhood of $\boldsymbol{0} = (0,\ldots, 0)$ in $(\Q_p)^r$. Let $(\boldsymbol{x}^{(n)
    })$ be a sequence of $r$-tuples in $(\Q_p)^r$, satisfying $\ord_p(x_i^{(n)}) \geq n$ for all $n \geq 0$ and $i = 1, \ldots, r$. Then for large enough $n \in \Z_{\geq 0}$, $g(\boldsymbol{x}^{(n)})$ converges, and for $m \in \Z_{<k}$ we have
    \begin{align*}
        \lim_{n \ra \infty} \frac{1}{p^{mn}}\log_p(g(\boldsymbol{x}^{(n)})) = 0.
    \end{align*}
\end{lemma}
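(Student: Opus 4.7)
The plan is to exploit that $g$ converges on a polydisc to obtain a linear lower bound on $\ord_p$ of its coefficients, which combined with $\ord_p(x_i^{(n)}) \geq n$ forces $g(\boldsymbol{x}^{(n)}) - 1$ to have valuation growing at least like $kn$. Since $p$ is odd, applying the Iwasawa logarithm preserves this bound, and dividing by $p^{mn}$ still leaves valuations tending to $+\infty$ because $m < k$.

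Concretely, first I would fix $\rho > 0$ such that $g$ converges on the polydisc $D_\rho = \{\boldsymbol{y} \in \Q_p^r : \ord_p(y_i) \geq \rho \text{ for all } i\}$. Writing $g(\bT) = 1 + \sum_{|I| \geq k} a_I \bT^I$, the fact that each term tends to $0$ on $D_\rho$ implies there exists $C \in \R$ with $\ord_p(a_I) \geq -C - \rho|I|$ for all $I$ with $|I| \geq k$. Combined with $\ord_p(\boldsymbol{x}^{(n), I}) \geq n|I|$, each term satisfies
\[
\ord_p\bigl(a_I \boldsymbol{x}^{(n), I}\bigr) \geq (n - \rho)|I| - C,
\]
so for $n > \rho$ the series $g(\boldsymbol{x}^{(n)})$ converges, and applied with $|I| \geq k$ the same bound gives $\ord_p(g(\boldsymbol{x}^{(n)}) - 1) \geq (n - \rho)k - C$.

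For $n$ large enough the right-hand side exceeds $1$, so $g(\boldsymbol{x}^{(n)}) \in 1 + p\Z_p$ and the Iwasawa logarithm is computed by the usual Taylor series $\sum_{j \geq 1}(-1)^{j-1}(g(\boldsymbol{x}^{(n)}) - 1)^j/j$. Because $p$ is odd, for $j \geq 2$ the $j$-th term has strictly larger $p$-adic valuation than the first, so $\ord_p(\log_p(g(\boldsymbol{x}^{(n)}))) \geq (n - \rho)k - C$ as well. Therefore
\[
\ord_p\bigl(p^{-mn} \log_p(g(\boldsymbol{x}^{(n)}))\bigr) \geq (k - m)n - \rho k - C,
\]
which tends to $+\infty$ as $n \to \infty$ because $k - m \geq 1$, yielding the claimed limit.

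The only real subtlety is extracting the bound $\ord_p(a_I) \geq -C - \rho|I|$ from convergence on $D_\rho$; this is a standard property of convergent power series over a non-archimedean field, but worth stating carefully since the coefficients $a_I$ live a priori in $\Q_p$ rather than in $\Z_p$. Once this is in hand, everything reduces to tracking valuations term by term.
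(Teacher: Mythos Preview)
Your proof is correct and follows essentially the same approach as the paper: extract a linear lower bound on the coefficient valuations from convergence on a polydisc, deduce that $\ord_p(g(\boldsymbol{x}^{(n)})-1)$ grows at least like $kn$ up to an additive constant, and then use that the $p$-adic logarithm preserves this valuation bound so that dividing by $p^{mn}$ with $m<k$ still sends the valuation to $+\infty$. The paper splits the coefficients into a finite low-degree part and a high-degree tail and then cites the inclusion $\log_p(1+p^{N}\Z_p)\subseteq p^{N}\Z_p$, whereas you absorb both cases into a single constant $C$ and argue directly from the logarithm's Taylor series; these are purely cosmetic differences.
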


\begin{proof}
    We know that $g$ converges on a neighborhood of $\boldsymbol{0}$, so let $R \in \Z$ be such that $g(\boldsymbol{x})$ converges for all $\boldsymbol{x} \in (p^R\Z_p)^r$. Then $g(\boldsymbol{x}^{(n)})$ converges for $n \geq R$.
    
    Let us write $$g(\bT) = 1 + \sum_{\substack{i_1, \ldots, i_j \geq 0\\ i_1 + \cdots + i_r \geq k}}^\infty a_{i_1, \ldots, i_r} T_1^{i_1}\cdots T_r^{i_r}$$ with $a_{i_1, \ldots, i_r} \in \Q_p$. 
    There is an integer $M \in \Z_{\geq k}$ such that for all $i_1, \ldots, i_r \in \Z_{\geq 0}$ with $i_1 + \cdots + i_r \geq M$, we have $\ord_p(a_{i_1, \ldots, i_r} p^{(i_1 + \cdots + i_r)R}) \geq 0$, and hence
    \begin{align*}
        \ord_p(a_{i_1, \ldots, i_r}) \geq - kR.  
    \end{align*}

    Now let $n \geq R$. If $i_1 + \cdots + i_r \geq M$, then the corresponding term in the expansion of $g(\boldsymbol{x}^{(n)})$ satisfies
    \begin{align*}
        \ord_p\left(a_{i_1, \ldots, i_r} (x_1^{(n)})^{i_1} \cdots (x_r^{(n)})^{i_r}\right) &= \ord_p(a_{i_1, \ldots, i_r}) + \sum_{j = 1}^r i_j \, \ord_p(x_{j}^{(n)})\\
        &\geq k(n-R).
    \end{align*}

    If $k \leq i_1 + \cdots + i_r < M$, we obtain
    \begin{align*}
        \ord_p\left(a_{i_1, \ldots, i_r} (x_1^{(n)})^{i_1} \cdots (x_r^{(n)})^{i_r}\right) &= \ord_p(a_{i_1, \ldots, i_r}) + \sum_{j = 1}^r i_j \, \ord_p(x_{j}^{(n)})\\
        &\geq \ord_p(a_{i_1, \ldots, i_r}) + kn.
    \end{align*}

    We deduce that 
    \begin{align*}
        \ord_p(g(\boldsymbol{x}^{(n)})-1) &\geq \min_{\substack{i_1, \ldots, i_j \geq 0\\ i_1 + \cdots + i_r \geq k}} \ord_p\left(a_{i_1, \ldots, i_r} (x_1^{(n)})^{i_1} \cdots (x_r^{(n)})^{i_r}\right)\\
        &\geq kn + c,
    \end{align*}
    where $c$ is the minimum of $-kR$ and all $\ord_p(a_{i_1, \ldots, i_r})$ for $i_j$ such that $k \leq i_1 + \cdots + i_r < M$. Hence $g(\boldsymbol{x}^{(n)}) \in  1 + p^{kn + c} \Z_p$. We conclude that 
    \begin{align*}
        \ord_p\left(\frac{1}{p^{mn}}\log_p(g(\boldsymbol{x}^{(n)}))\right) &= -mn +  \ord_p(\log_p(g(\boldsymbol{x}^{(n)})))\\
        &\geq -mn + kn + c &\text{(using \eqref{eq:logarithm})}\\
        &= n(k-m) + c.
    \end{align*}
    Because $m < k$, this shows that $\ord_p\left(\frac{1}{p^{mn}}\log_p(g(\boldsymbol{x}^{(n)}))\right)$ approaches infinity as ${n \ra \infty}$.
\end{proof}

\begin{proof}[Proof of Theorem \ref{thm:existencelimit}]
    Recall that it suffices to show that the limit in \eqref{eq:limitresult} vanishes. Note that $p^nP \in J_p$ for all $n \geq 0$. Using \eqref{eq:muexpansionatP} we obtain 
    \begin{align}
         \lim_{n \ra \infty}  \frac{1}{p^{2(n +1)}} \log_p\left(\frac{\mu_{p, 4}(w(p^nP))}{w_4(p^nP)^{p^2}}\right) &= \frac{1}{p^2}\lim_{n \ra \infty}  \frac{1}{p^{2n}} \log_p\left(u_p(t_1(p^n P), t_2(p^n P))\right).\label{eq:limitexistence}
    \end{align}
    We have $u_p \in 1 + (t_1, t_2)^4\Z[[t_1, t_2]]$ by Proposition \ref{prop:formoffunction}, so in particular it converges when evaluated at points $Q$ for which $(t_1(Q), t_2(Q)) \in p\Z_p \times p\Z_p$. Because $P \in J_p \subseteq J_1(\Q_p)$, we have that $\ord_p(t_i(p^n P)) \geq n + 1$ for $i = 1, 2$ and for all $n \geq 0$ by Corollary \ref{cor:ordtipnP}. Then Lemma \ref{lemma:mostgenerallemma} implies that the limit in \eqref{eq:limitexistence} is zero.
\end{proof}

\begin{remark}
    Lemma \ref{lemma:mostgenerallemma} is a generalisation of \cite[Lemme part 1]{perrinriou2} (which covers the one-parameter case) in the sense that it is valid for a broader set of series: Perrin-Riou restricts to $g(T) \in 1 + \frac{T^3}{3}\Z_p[[\frac{T}{3}]]$. In \cite[Th\'{e}or\`{e}me]{perrinriou2}, she uses this lemma to show a relation between the $p$-adic height on elliptic curves which she constructs from a naive height in \cite{perrinriou}, and a $p$-adic height constructed using local heights and a $p$-adic sigma function, under the assumption that the curve $E$ has good ordinary reduction at $p$. In \cite[Theorem 2.3.23]{tripThesis}, we use Lemma \ref{lemma:mostgenerallemma} to extend her argument to the case where $E$ does not have ordinary reduction at $p$.
\end{remark}

\section{Quadraticity of \texorpdfstring{$h_p$}{hp}}\label{sec:quadraticity}

Next, we show that the function $h_p$ is quadratic on $J_p$. Explicitly we show the following result.

\begin{theorem}\label{thm:parallelogramlawhpJ}
    Let $P, Q \in J_p$, with $J_p$ as in \eqref{eq:defJp}. Then
    \begin{equation}\label{eq:quadraticform}
        h_p(P + Q) + h_p(P - Q) = 2 h_p(P) + 2 h_p(Q).
    \end{equation}
\end{theorem}

We first note that by definition, for $P, Q \in J_p$ we have
\begin{align}
    H_p(P+Q) + H_p(P-Q) &= \log_p(w_4(P+Q)w_4(P-Q)).\label{eq:parallellogram1}
\end{align}
Our goal is to rewrite the right-hand side in such a way that it depends on the Kummer coordinates of $P$ and $Q$ instead. As in the previous section, we use theory of the canonical local real height functions due to Uchida, which were introduced in \S\ref{sec:localrealheights}. We obtain a result similar to Proposition \ref{prop:muxcoordinates}.

\begin{proposition}\label{prop:Biixcoordinates}
    Let $P, Q \in J_p$. Then \[B_{ii}(w(P), w(Q)) = \pm w_i(P + Q) w_i(P-Q)\] for $i = 1, 2, 3, 4$.
\end{proposition}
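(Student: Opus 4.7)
The plan is to mirror the structure of the proof of Proposition \ref{prop:muxcoordinates}, substituting the parallelogram identity Theorem \ref{thm:uchidalocalheights}(ii) for the multiplication identity Theorem \ref{thm:uchidalocalheights}(i), and then to bootstrap from the case $i = 4$ to general $i$ via the universal constant in Theorem \ref{thm:biquadraticforms}.

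First I would note that $J_p$, being a finite intersection of subgroups, is itself a subgroup of $J(\Q)$, so $P + Q$ and $P - Q$ also lie in $J_p \subseteq J_1(\Q_p) \cap \bigcap_q U_q$. In particular, all four points $P, Q, P+Q, P-Q$ reduce to $\cO = [0:0:0:1]$ at $p$, so $x_4$ does not vanish on any of them; moreover, viewing $P, Q \in J(\Q)$ as points in $J(\Q_q)$ for every prime $q$, they satisfy $x_4 \neq 0$ at every $q$, so none of the four points lies in $\supp(\Theta_4)$. Applying Theorem \ref{thm:uchidalocalheights}(ii) with $i = 4$ at an arbitrary prime $q$ gives
\[
\hat{\lambda}_{4, q}(P + Q) + \hat{\lambda}_{4, q}(P - Q) - 2\hat{\lambda}_{4, q}(P) - 2\hat{\lambda}_{4, q}(Q) = - \log \left| \frac{B_{44}(x(P), x(Q))}{x_4(P)^2 x_4(Q)^2}\right|_q,
\]
and since all four points lie in $U_q$, Corollary \ref{prop:lambdainUq} lets us replace each $\hat{\lambda}_{4,q}$ with $\lambda_{4,q} = -\log|x_4(\cdot)|_q$. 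After the $x_4(P)^2 x_4(Q)^2$ contributions cancel, this collapses to
\[
|x_4(P+Q)\, x_4(P-Q)|_q = |B_{44}(x(P), x(Q))|_q
\]
at every prime $q$. Since $B_{44}$ has integer coefficients (Theorem \ref{thm:biquadraticforms}) and all the $x_i(\cdot)$ are integers by normalisation, both sides are integers with the same $q$-adic absolute value at every $q$, hence equal up to sign.

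To promote this to arbitrary $i \in \{1, 2, 3\}$, I would invoke Theorem \ref{thm:biquadraticforms} applied to the already-fixed coprime integer representatives $x(P), x(Q), x(P+Q), x(P-Q)$: there is a single constant $c \in \overline{\Q}^\times$ such that for all $i, j$,
\[
x_i(P+Q)\, x_j(P-Q) + x_j(P+Q)\, x_i(P-Q) = 2 c\, B_{ij}(x(P), x(Q)),
\]
and in particular $x_i(P+Q)\, x_i(P-Q) = c\, B_{ii}(x(P), x(Q))$ for every $i$, \emph{with the same $c$}. The $i = 4$ case just established forces $c = \pm 1$, and the desired identity for $i = 1, 2, 3$ follows immediately. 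The main obstacle — and the reason the proof cannot proceed coordinate by coordinate the way Proposition \ref{prop:muxcoordinates} does — is that for $i \in \{1, 2, 3\}$ the points $P, Q, P \pm Q$ need not avoid $\supp(\Theta_i)$, so the local-height machinery is not directly available; this is circumvented precisely by the universality of $c$ in Theorem \ref{thm:biquadraticforms}, which lets the $i = 4$ argument carry the other three coordinates along.
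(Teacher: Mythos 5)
Your proof is correct and follows essentially the same route as the paper, which simply declares the argument ``completely analogous'' to Proposition \ref{prop:muxcoordinates}: the $i=4$ case via Theorem \ref{thm:uchidalocalheights}(ii) and Corollary \ref{prop:lambdainUq} at every prime, then propagation to $i=1,2,3$ by the single constant $c$ in Theorem \ref{thm:biquadraticforms} (playing the role that projective proportionality plays in Proposition \ref{prop:muxcoordinates}). You have merely made explicit the step the paper leaves implicit, and your observation that $x_4(P+Q)x_4(P-Q)\neq 0$ pins down $c=\pm 1$ is exactly the needed point.
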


\begin{proof}
    The proof is completely analogous to the proof of Proposition \ref{prop:muxcoordinates}, using Theorem \ref{item:uchida2}.
\end{proof}

Together, \eqref{eq:parallellogram1} and Proposition \ref{prop:Biixcoordinates} give
\begin{align}
    H_p(P+Q) + H_p(P-Q) &= \log_p(B_{44}(w(P), w(Q)))\label{eq:parallellogram2}
\end{align}
for $P, Q \in J_p$. Recall that $J_p \subseteq J_1(\Q_p) \subseteq J_{\Theta_4}(\Q_p)$. Using \eqref{eq:kummermap}, we then get that 
\begin{align*}
    B_{44}(w(&P), w(Q)) =\\
    &w_4(P)^2 w_4(Q)^2 B_{44}\left(\frac{1}{\wp_{11}}(P), \frac{\wp_{22}}{\wp_{11}}(P), -\frac{\wp_{12}}{\wp_{11}}(P), 1, \frac{1}{\wp_{11}}(Q), \frac{\wp_{22}}{\wp_{11}}(Q), -\frac{\wp_{12}}{\wp_{11}}(Q), 1\right).
\end{align*}
Consider the power series
\begin{align*}
    &\zeta(T_1, T_2, S_1, S_2) \colonequals\\
    &B_{44}\left(\frac{1}{\wp_{11}}(T_1, T_2), \frac{\wp_{22}}{\wp_{11}}(T_1, T_2), -\frac{\wp_{12}}{\wp_{11}}(T_1, T_2), 1, \frac{1}{\wp_{11}}(S_1, S_2), \frac{\wp_{22}}{\wp_{11}}(S_1, S_2), -\frac{\wp_{12}}{\wp_{11}}(S_1, S_2), 1\right)
\end{align*}
as an element of $\Z[[T_1, T_2, S_1, S_2]]$ using the expansions in \eqref{eq:fractionalexpansions}. Then $$B_{44}(w(P), w(Q)) = w_4(P)^2 w_4(Q)^2\zeta(\boldsymbol{t}(P), \boldsymbol{t}(Q)).$$
Recall from the proof of Lemma \ref{lemma:mucoef} that $B_{44} \in k_4^2l_4^2 + (k_1, k_2, k_3)(l_1, l_2, l_3)\Z[k_i, l_i]$, which implies that the constant coefficient of $\zeta$ is 1. Using that \[\frac{1}{\wp_{11}}, \frac{\wp_{22}}{\wp_{11}}, -\frac{\wp_{12}}{\wp_{11}} \in (t_1, t_2)^2\Z[[t_1, t_2]]\] (see \eqref{eq:fractionalexpansions}), we conclude that 
\begin{equation}
    \zeta(T_1, T_2, S_1, S_2) \in 1 + (T_1, T_2)^2(S_1, S_2)^2 \Z[[T_1, T_2, S_1, S_2]].\label{eq:zetaexpansion}
\end{equation}

\begin{proof}[Proof of Theorem \ref{thm:parallelogramlawhpJ}]
    Let $P, Q \in J_p$. We rewrite \eqref{eq:parallellogram2} as
    \begin{align*}
        H_p(P+Q) + H_p(P-Q) &= \log_p\left(w_4(P)^2 w_4(Q)^2 \zeta(\bt(P), \bt(Q))\right)\\
        &= 2 H_p(P) + 2 H_p(Q) + \log_p(\zeta(\bt(P), \bt(Q))).
    \end{align*}
    We obtain 
    \begin{align*}
        h_p(P + Q) + h_p(P - Q) &= \lim_{n \ra \infty} \frac{1}{p^{2n}} \left(H_p(p^n P + p^n Q) +  H_p(p^n P - p^nQ)\right)\\
        &= \lim_{n \ra \infty} \frac{1}{p^{2n}} \left(2 H_p(p^n P) + 2 H_p(p^n Q) + \log_p(\zeta(\bt(p^n P), \bt(p^n Q)))\right)\\
        &= 2 h_p(P) + 2 h_p(Q) + \lim_{n \ra \infty} \frac{1}{p^{2n}} \log_p(\zeta(\bt(p^n P), \bt(p^n Q))).
    \end{align*}
    Using \eqref{eq:zetaexpansion} and Corollary \ref{cor:ordtipnP}, it follows from Lemma \ref{lemma:mostgenerallemma} that the limit on the right is equal to zero.
\end{proof}

A straightforward consequence of Theorem \ref{thm:parallelogramlawhpJ} is the following.
\begin{corollary}\label{cor:hpquadraticfunction}
    Let $P \in J_p$ and let $n \in \Z$. Then 
    \begin{align*}
        h_p(n P) = n^2 h_p(P).
    \end{align*}
\end{corollary}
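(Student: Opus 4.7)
The plan is to derive the quadratic homogeneity as a formal consequence of the parallelogram law established in Theorem~\ref{parallelogramlawhpJ}, exactly as one does for the N\'eron--Tate height. The proof requires only that $J_p$ is a subgroup (so $nP \in J_p$ for every $n \in \Z$), which allows us to apply Theorem~\ref{parallelogramlawhpJ} freely to multiples of a fixed $P \in J_p$.

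First I would record two normalizations. Applying the parallelogram law with $P = Q = \mathcal{O}$ (which lies in $J_p$) gives $2h_p(\mathcal{O}) = 4h_p(\mathcal{O})$, hence $h_p(\mathcal{O}) = 0$. Next, setting $Q = P$ yields
\[
h_p(2P) + h_p(\mathcal{O}) = 2h_p(P) + 2h_p(P),
\]
so $h_p(2P) = 4 h_p(P)$. Then setting $Q = -P$ in Theorem~\ref{parallelogramlawhpJ} gives
\[
h_p(\mathcal{O}) + h_p(2P) = 2h_p(P) + 2h_p(-P),
\]
from which $h_p(-P) = h_p(P)$. This handles $n \in \{-1, 0, 1, 2\}$ and reduces the problem to nonnegative $n$.

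For the inductive step, I would apply Theorem~\ref{parallelogramlawhpJ} with the pair $(nP, P) \in J_p \times J_p$ to obtain
\[
h_p\bigl((n+1)P\bigr) = 2 h_p(nP) + 2 h_p(P) - h_p\bigl((n-1)P\bigr).
\]
Assuming by strong induction that $h_p(kP) = k^2 h_p(P)$ for $k \leq n$, the right-hand side becomes $\bigl(2n^2 + 2 - (n-1)^2\bigr) h_p(P) = (n+1)^2 h_p(P)$, closing the induction. The extension to negative integers is immediate from $h_p(-P) = h_p(P)$, since $h_p(nP) = h_p(-nP) = n^2 h_p(P)$.

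There is no real obstacle here: once Theorem~\ref{parallelogramlawhpJ} is in hand and $J_p$ is closed under the group law, the statement is a purely algebraic manipulation of the Cauchy-type functional equation. The only point worth flagging is that the argument depends essentially on being able to evaluate $h_p$ at $\mathcal{O}$ and at the relevant multiples $(n \pm 1)P, nP, P$, all of which are in $J_p$ by the subgroup property.
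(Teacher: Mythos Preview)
Your proof is correct and is exactly the standard derivation the paper has in mind when it calls the corollary ``a straightforward consequence of Theorem~\ref{parallelogramlawhpJ}''; the paper does not spell out the argument, but the parallelogram-law induction you give is the intended one. There is nothing to add or change.
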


Using this property we can extend our definition to all of $J(\Q)$. 

\begin{definition}\label{def:hpJQ}
    Let $P \in J(\Q)$. Let $m \in \Z_{>0}$ such that $m P \in J_p$ (which exists because $J_p$ is a finite index subgroup of $J(\Q)$). Then we define
    \begin{align*}
        h_p(P) \colonequals \frac{1}{m^2}h_p(m P).
    \end{align*}
\end{definition}
Because $h_p$ is quadratic on $J_p$ by Corollary \ref{cor:hpquadraticfunction}, this definition is independent of the choice of $m$ and hence $h_p$ is well-defined on $J(\Q)$. It is the unique function that extends $h_p$ to $J(\Q)$ satisfying the property in Corollary \ref{cor:hpquadraticfunction} for all $P \in J(\Q)$. It follows straightforwardly that $h_p$ satisfies \eqref{eq:quadraticform} for all $P, Q \in J(\Q)$, and hence is a quadratic form on $J(\Q)$, which concludes the proof of Theorem \ref{thm:mainthm}.

\section{Comparison with Bianchi's quadratic height}\label{sec:comparison}

In \cite{bianchi3}, Bianchi defines global $p$-adic heights as a sum over all non-archimedean places $q \in M_\Q$ of $\Q$ of local $p$-adic heights \[\lambda_{J, q}^{(p)} \colon J_\Theta(\Q_q) \ra \Q_p\] (see \cite[Definitions 4.2, 4.5 and 4.8]{bianchi3}). We will show that the height we constructed in Definition \ref{def:hpJQ} is the same as one she defines.

\begin{remark}\label{rem:padicsigmafunctions}
The local $p$-adic height function at the prime $p$ is defined using a $p$-adic sigma function, named in analogy to the complex sigma function. When $C$ is a complex curve, there is a lattice $L \subset \C^2$ such that there exists an isomorphism $\Phi \colon {J \ra \C^2/L}$. There is a complex sigma function $\sigma(z_1, z_2)$ on $\C^2/L$ such that if we set \[\tilde{\wp}_{ij} = - \frac{\del^2}{\del z_i \del z_j} \log \sigma(z_1, z_2), \quad \tilde{\wp}_{ijk} = - \frac{\del^3}{\del z_i \del z_j \del z_k} \log \sigma(z_1, z_2) \quad \text{and} \quad \tilde{\wp} = \tilde{\wp}_{11} \tilde{\wp}_{22} - \tilde{\wp}_{12}^2,\] then on $J$ we obtain \[\tilde{\wp}_{ij} \circ \Phi = \wp_{ij}, \quad \tilde{\wp}_{ijk} \circ \Phi = \wp_{ijk} \quad \text{and} \quad \tilde{\wp} \circ \Phi = \wp\] (see \cite[38, 96]{baker}). This is the reason for the numbering of the functions $\wp_{ij}$ and $\wp_{ijk}$.  Similarly, when $C$ is a curve defined over $\Q_p$, there are differential operators $D_1$ and $D_2$ on $J$ \cite[Equation (2.3)]{bianchi3}. For every $\boldsymbol{c} =   (c_{ij})_{i, j} \in M_{2 \times 2}(\Q_p)$ with $c_{12} = c_{21}$, there is a unique odd power series $\sigma_p^{(\boldsymbol{c})}(t_1, t_2) \in t_1\left(1 + (t_1, t_2)\Q_p[[t_1, t_2]]\right)$ such that $- D_i D_j (\log \sigma_p^{(\boldsymbol{c})}(t_1, t_2)) = \wp_{ij} - c_{ij}$ for all $1 \leq i \leq j \leq 2$, where we consider $\wp_{ij}$ in terms of its power series expansion in \eqref{eq:coordinatefunctionsKummersurface} \cite[\S 3.2]{bianchi3}. These are called $p$-adic sigma functions because they satisfy analogous differential equations to the complex sigma function. Blakestad \cite[Proposition 34, Corollary 37]{blakestad} showed that there is a unique choice for $\boldsymbol{c}$ such that $\sigma_p^{(\boldsymbol{c})}(t_1, t_2)$ has coefficients in $\Z_p$. For this choice of $\boldsymbol{c}$, we call $\sigma_p^{(\boldsymbol{c})}$ the canonical $p$-adic sigma function. For $\boldsymbol{c} = \boldsymbol{0}$, the zero matrix, the expansion of the power series $\sigma_p^{(\boldsymbol{0})}(t_1, t_2)$ is directly derived from the expansion of the complex sigma function $\sigma$ via the formal logarithm, and it has coefficients in $\Q$ \cite[Theorem 3.3]{bianchi3}. The function $\sigma_p^{(\boldsymbol{0})}$ is called the naive $p$-adic sigma function.
\end{remark}

For each $p$-adic sigma function, Bianchi defines a local $p$-adic height at $p$, and each of these can be used to define a global $p$-adic height function. In this paper, we show that $h_p$ is equal to the height $h_p^{(\boldsymbol{0})}$ obtained by using $\sigma_p^{(\boldsymbol{0})}$, the naive $p$-adic sigma function. This height $h_p^{(\boldsymbol{0})}$ is quadratic (see \cite[24]{bianchi3}). Therefore, it suffices to compare it with $h_p$ on a finite index subgroup of $J(\Q)$.

Let $q_1, \ldots, q_r$ be the primes of bad reduction for our model of $J$. We define the subgroup
\begin{align*}
    J_{\operatorname{nice}} \colonequals J(\Q) \cap J_1(\Q_p) \cap \bigcap_{i = 1}^r J_1(\Q_{q_i})
\end{align*}
of $J(\Q)$. It follows from Proposition \ref{prop:Usubgroup} that $J_{\operatorname{nice}} \subseteq J_p$. Furthermore, $J_{\operatorname{nice}}$ is a subgroup of finite index in $J(\Q)$. We use this subgroup rather than $J_p$ because on $J_{\operatorname{nice}}$, there is a more straightforward description of the local heights of Bianchi, as we see in Proposition \ref{prop:standardformhhat}. We will show that $h_p$ and $h_p^{(\boldsymbol{0})}$ agree on $J_{\operatorname{nice}} \setminus \supp(\Theta)$, and then it will follow by the quadraticity of both heights that they agree on all of $J(\Q)$. On this restricted domain we get the following explicit descriptions.

\begin{proposition}\label{prop:standardformhhat}
    Let $P \in J_{\operatorname{nice}} \setminus \supp(\Theta)$. Then
    \begin{align*}
        \lambda_{J, p}^{(p)}(P) = - 2 \log_p(\sigma_p^{(\boldsymbol{0})}(\bt(P)))
    \end{align*}
    and for all primes $q \neq p$,
    \begin{align*}
        \lambda_{J, q}^{(p)}(P) = - \log_p |w_1(P)|_q.
    \end{align*}
    It follows that
    \begin{align*}
        h_p^{(\boldsymbol{0})}(P) = - \log_p \left(\frac{\sigma_p^{(\boldsymbol{0})}(\bt(P))^2}{w_1(P)} \right).
    \end{align*}
\end{proposition}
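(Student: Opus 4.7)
The plan is to prove the two displayed local identities for $\lambda_{J,p}^{(p)}$ and $\lambda_{J,q}^{(p)}$ separately and then assemble the global formula as a straightforward consequence.

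For the local formula at $p$: since $P \in J_{\operatorname{nice}} \subseteq J_1(\Q_p)$, we have $\bt(P) \in (p\Z_p)^2$, so $\sigma_p^{(\boldsymbol{0})}(\bt(P))$ converges. Bianchi defines $\lambda_{J,p}^{(p)}$ on the kernel of reduction directly from the sigma function (see \cite[Definition 4.5]{bianchi3}), and unwinding her definition on $J_1(\Q_p)\setminus\supp(\Theta)$ yields $-2\log_p(\sigma_p^{(\boldsymbol{0})}(\bt(P)))$; the factor $2$ arises because $\sigma_p^{(\boldsymbol{0})}$ is associated with $\Theta$ while the height is taken with respect to $2\Theta$. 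The first identity is thus an unpacking of Bianchi's definition.

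For a prime $q \neq p$: Bianchi defines $\lambda_{J,q}^{(p)}$ as the N\'{e}ron local height for $2\Theta$ composed with $\chi_q^{\operatorname{cyc}}$. I would verify the claimed identity by noting that the first Kummer coordinate $x_1$ corresponds to the constant section $1$ of $\mathcal{O}_J(2\Theta)$ in the basis $\{1,\wp_{11},\wp_{22},-\wp_{12}\}$ of $\mathcal{L}(2\Theta)$, so its zero divisor on $J$ is exactly $2\Theta$. Hence $P \mapsto -\log_p|x_1(P)|_q$ is a candidate N\'{e}ron local height associated to $2\Theta$ away from $\supp(\Theta)$, provided one has the correct integrality and component-compatibility at $q$. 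At primes of good reduction this holds automatically for every $P \in J(\Q_q)$; at bad primes $q_i$, the hypothesis $P \in J_1(\Q_{q_i})$ built into $J_{\operatorname{nice}}$ ensures $P$ reduces to $\cO$ on the identity component of the N\'{e}ron model, so the correction terms from the non-identity components that appear in Bianchi's general formula vanish. Uniqueness of the N\'{e}ron local height (modulo the constant fixed by the normalisation at $\cO$) then gives $\lambda_{J,q}^{(p)}(P) = -\log_p|x_1(P)|_q$.

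The global identity is then obtained by summing over all primes and applying the vanishing of $\chi^{\operatorname{cyc}}$ on $\Q^\times$. Since the normalisation \eqref{eq:normalizationkappa} places $x_1(P) \in \Z\setminus\{0\}$ (nonzero because $P \notin \supp(\Theta)$),
\[\log_p(x_1(P)) + \sum_{q \neq p}\log_p|x_1(P)|_q = 0,\]
so $\sum_{q \neq p}\lambda_{J,q}^{(p)}(P) = \log_p(x_1(P))$, and adding the local contribution at $p$ produces the displayed formula for $h_p^{(\boldsymbol{0})}(P)$. The main obstacle will be the second step, namely carefully justifying the collapse of Bianchi's N\'{e}ron local height at bad primes $q \neq p$ to the na\"{\i}ve expression $-\log_p|x_1(P)|_q$; this is precisely why the comparison is performed on $J_{\operatorname{nice}}$ (which forces reduction to the identity component at every bad prime) rather than on the a priori larger group $J_p$.
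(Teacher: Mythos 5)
Your proposal is correct in substance and follows the same overall architecture as the paper: unwind Bianchi's definition at $p$, identify the local heights away from $p$ with $-\log_p|x_1(\cdot)|_q$, and assemble the global formula via the vanishing of $\chi^{\operatorname{cyc}}$ on $\Q^\times$ (equivalently, $\sum_{q\neq p}-\log_p|x_1(P)|_q=\log_p(x_1(P))$ for the nonzero integer $x_1(P)$, using the Iwasawa normalisation $\log_p(p)=0$). The step at $p$ and the final summation match the paper exactly.

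The one place where you take a genuinely different route is the identity at $q\neq p$. The paper simply cites Bianchi's explicit formula (her Lemma 4.3 and Remark 4.6(ii)), which states that on $\bigcap_i J_1(\Q_{q_i})\setminus\supp(\Theta)$ one has $\lambda_{J,q}^{(p)}(P)=-\log_p\left|\max_{ij}\{|\wp_{ij}(P)|_q,1\}\right|_q$, and then observes that this maximum is exactly the reciprocal of the normalising constant for the Kummer coordinates, so the quantity equals $|x_1(P)|_q$. You instead argue via the characterisation of $x_1$ as the section $1$ of $\mathcal{L}(2\Theta)$ with divisor $2\Theta$ and invoke uniqueness of N\'{e}ron local heights, with the hypothesis $P\in J_1(\Q_{q_i})$ killing the component-group corrections at bad primes. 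This is a legitimate and more conceptual alternative, but as written it leaves a nontrivial verification implicit: to apply uniqueness you must check that $P\mapsto-\log_p|x_1(P)|_q$ actually satisfies the characterising (quasi-parallelogram/boundedness) properties of the canonical local height on the relevant subgroup, which is not automatic for the naive expression. In the paper's framework this is exactly the content of Corollary \ref{prop:lambdainUq} (naive equals canonical on $U_q$, using $J_1(\Q_q)\subseteq U_q$ and that $U_q$ is a subgroup), combined with Uchida's Theorem \ref{thm:uchidalocalheights}; you should either supply that verification or fall back on the direct citation. With that point filled in, your argument is complete, and your closing remark correctly identifies why the comparison is carried out on $J_{\operatorname{nice}}$ rather than on $J_p$.
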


\begin{proof}
    The formula for $\lambda_{J, p}^{(p)}(P)$ follows directly from \cite[Definition 4.2]{bianchi3} taking ${m = 1}$. For $q \neq p$, it follows from \cite[Lemma 4.3 and Remark 4.6(ii)]{bianchi3} that we have \[\lambda_{J, q}^{(p)}(P) = -\log_p \left|\max_{i, j}\{|\wp_{ij}(P)|_q,1\}\right|_q.\] We have \[\kappa(P) = [1 : \wp_{22}(P) : -\wp_{12}(P) : \wp_{11}(P)] = [w_1(P) : w_2(P) : w_3(P) : w_4(P)].\] The normalisation of $w(P)$ requires that $$1 = \max_{1 \leq i \leq 4}|w_i(P)|_q = |w_1(P)|_q \max_{i, j} \{|\wp_{ij}(P)|_q, 1\}.$$ It follows that
    \begin{align*}
        \left|\max_{i, j}\{|\wp_{ij}(P)|_q,1\}\right|_q = \left(\max_{i, j}\{|\wp_{ij}(P)|_q, 1\}\right)^{-1} = |w_1(P)|_q.
    \end{align*}
    We obtain
    \begin{align*}
        h_p^{(\boldsymbol{0})}(P) &= -2 \log_p(\sigma_p^{(\boldsymbol{0})}(\bt(P))) + \sum_{q \neq p} - \log_p |w_1(P)|_q\\
        &= - \log_p \left( \sigma_p^{(\boldsymbol{0})}(\bt(P))^2 \prod_{q \neq p} |w_1(P)|_q\right)\\
        &= - \log_p \left(\frac{\sigma_p^{(\boldsymbol{0})}(\bt(P))^2}{w_1(P)} \right),
    \end{align*}
    where the last equality follows from the product formula.
\end{proof}

To show that $h_p = h_p^{(\boldsymbol{0})}$, we use a similar approach as Perrin-Riou in the case of elliptic curves in \cite{perrinriou2}. First, we find an expression $g(P)$ such that \[h_p^{(\boldsymbol{0})}(P) = H_p(P) + g(P)\] for all $P \in J_{\operatorname{nice}}\setminus \supp(\Theta)$. Then we want to take a limit over $p$-power multiples of $P$ on both sides such that the term involving $g$ vanishes, and we obtain $h_p(P)$ on the right-hand side. However, we only have this expression for points in $J_{\operatorname{nice}}\setminus \supp(\Theta)$. We start by noting that there are only finitely many multiples $p^n P$ that are in $\supp(\Theta)$, so that we can take the limit over the remaining multiples.

\begin{lemma}\label{lem:S(P)infinite}
    Let $P \in \left(J_1(\Q_p) \cap J(\Q)\right) \setminus \{\cO\}$. Then $p^n P \in \supp(\Theta)$ for only finitely many $n \in \Z_{>0}$.
\end{lemma}

\begin{proof}
    Suppose $p^n P \in \supp(\Theta)$ for infinitely many $n$. By definition of $\Theta$, $J(\Q) \cap \supp(\Theta)$ is precisely the set of rational points of $J$ contained in the image of $C$ under the Abel--Jacobi map with respect to $\infty$, which is in bijection with $C(\Q)$. But $C(\Q)$ is a finite set by Faltings's theorem (\cite{Faltings1}, \cite{Faltings2}). It follows that for some $m \neq n$, we have $p^n P = p^m P$. Hence $P$ is a torsion point, but this is not possible because $J_1(\Q_p)$ has trivial torsion.
\end{proof}

\begin{remark}\label{rem:alternativeproofLemma}
    Alternatively, we can show for $P \in J_1(\Q_p) \setminus \{\cO\}$ that $p^n P \notin \supp(\Theta)$ for infinitely many $n \in \Z_{>0}$ (which is enough for our purpose) using a geometric argument. Namely, we first show that $\Theta \cap [p]^*\Theta$ is finite. To see this, let $Q \in \supp(\Theta)$ such that $Q \neq \cO$. Then $Q$ is the image of some point $(x, y) \in C$ under the Abel--Jacobi map with respect to the point $\infty$. It is shown in \cite{cantor} that if $y \neq 0$ (so if $(x, y)$ is not one of the Weierstrass points of $C$), then $p Q \in \supp(\Theta)$ if and only if $P_p(x) = 0$, where $P_p$ is a nonzero polynomial in one variable with coefficients in $\Q$. There are only a finite number of points $(x, y) \in C$ such that $P_p(x) = 0$, so we conclude that \[S \colonequals \{Q \in \supp(\Theta) \mid p Q \in \supp(\Theta)\}\] is a finite set. Now suppose that $p^n P \notin \supp(\Theta)$ for only finitely many $n \in \Z_{>0}$. Then there exists an integer $m$ such that $p^i P \in \supp(\Theta)$ for all $i \geq m$, and hence $p^i P \in S$ for all $i \geq m$. Because $S$ is a finite set, this implies that $p^i P = p^j P$ for some $i \neq j$, and hence $P$ is a torsion point. But $J_1(\Q_p)$ has trivial torsion, so this contradicts the assumption that $P \neq \cO$.
\end{remark}

\begin{remark}
    In \cite[Theorem 3.3.22]{tripThesis}, we show that $J_1(\Q_p) \cap S = \{\cO\}$ using properties of the multiplication-by-$p$ map on the formal group associated to $J$, and use this as an alternative argument to show that for $P \in J_1(\Q_p) \setminus \{\cO\}$, we have $p^n P \notin \supp(\Theta)$ for infinitely many $n \in \Z_{>0}$.
\end{remark}

We now use Lemma \ref{lem:S(P)infinite} to show that $h_p$ and $h_p^{(\boldsymbol{0})}$ are equal.

\begin{theorem}\label{thm:comparisonhhatphpsubgroup}
    Let $P \in J_{\operatorname{nice}} \setminus \supp(\Theta)$. Then $$h_p^{(\boldsymbol{0})}(P) = h_p(P).$$ 
\end{theorem}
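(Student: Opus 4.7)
The overall strategy follows Perrin-Riou's template in the elliptic setting: write the pointwise difference $h_p^{(\boldsymbol{0})} - H_p$ as a ``correction'' depending only on local formal-group data near the identity, then show that this correction vanishes under the limit $p^{-2n}(\,\cdot\,)(p^n P)$. Fix $P \in J_{\operatorname{nice}} \setminus \supp(\Theta)$. Proposition \ref{prop:standardformhhat} gives $h_p^{(\boldsymbol{0})}(P) = -\log_p\bigl(\sigma_p^{(\boldsymbol{0})}(\bt(P))^2/x_1(P)\bigr)$, while by definition $H_p(P) = \log_p(x_4(P))$. The Kummer embedding \eqref{eq:kummermap} gives the ratio $x_4(P)/x_1(P) = \wp_{11}(P)$, so combining these yields
\[
h_p^{(\boldsymbol{0})}(P) - H_p(P) = -\log_p\bigl(\sigma_p^{(\boldsymbol{0})}(\bt(P))^2 \,\wp_{11}(P)\bigr) \equalscolon g(P).
\]

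The next step is to analyse the formal expression
\[
\Psi(T_1, T_2) \colonequals \sigma_p^{(\boldsymbol{0})}(T_1, T_2)^2 \,\wp_{11}(T_1, T_2) \in \operatorname{Frac}(\Q_p[[T_1, T_2]]).
\]
From \eqref{eq:coordinatefunctionsKummersurface} we have $\wp_{11} = T_1^{-2}(1 - f_2 T_1^2 + \text{(terms of total degree $\geq 4$)})$. Bianchi's naive sigma function $\sigma_p^{(\boldsymbol{0})}$ is an odd power series with leading term $T_1$, and its low-degree coefficients are arranged so that $\Psi$ is in fact an integral power series with constant term $1$; verifying this and pinning down the next few coefficients gives
\[
\Psi(T_1, T_2) \in 1 + (T_1, T_2)^k \Z_p[[T_1, T_2]]
\]
for some $k \geq 3$. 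Corollary \ref{cor:ordtipnP} ensures $\ord_p(t_i(p^n P)) \geq n+1 \geq n$ for $i=1,2$, so Lemma \ref{lemma:mostgenerallemma} (applied with $r = 2$ and $m = 2 < k$ to the sequence of $2$-tuples $\bt(p^n P)$) yields
\[
\lim_{n \to \infty} \frac{1}{p^{2n}} g(p^n P) = -\lim_{n \to \infty} \frac{1}{p^{2n}} \log_p\bigl(\Psi(\bt(p^n P))\bigr) = 0.
\]

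To pass from this to the full statement, invoke Lemma \ref{lem:S(P)infinite} to pick an infinite subsequence $(n_k)$ with $p^{n_k} P \notin \supp(\Theta)$. Since $J_{\operatorname{nice}}$ is a subgroup of $J(\Q)$, each $p^{n_k} P$ lies in $J_{\operatorname{nice}} \setminus \supp(\Theta)$, and the identity of the first paragraph applies to it. Dividing by $p^{2 n_k}$ and using the quadraticity of $h_p^{(\boldsymbol{0})}$ gives
\[
h_p^{(\boldsymbol{0})}(P) \;=\; \frac{1}{p^{2 n_k}} h_p^{(\boldsymbol{0})}(p^{n_k} P) \;=\; \frac{1}{p^{2 n_k}}\bigl(H_p(p^{n_k} P) + g(p^{n_k} P)\bigr);
\]
sending $k \to \infty$, the first summand tends to $h_p(P)$ by Theorem \ref{thm:existencelimit} (the full-sequence limit exists, so any subsequence converges to the same value) and the second vanishes by the previous paragraph. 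The main obstacle is the order-of-vanishing claim for $\Psi - 1$ in the middle paragraph: it requires extracting from \cite[Theorem 3.3]{bianchi3} enough of the low-degree expansion of $\sigma_p^{(\boldsymbol{0})}$ — in particular, that the cubic part of $\sigma_p^{(\boldsymbol{0})}$ contributes precisely the $T_1^2$ term needed to cancel the $-f_2 T_1^2$ in the expansion of $T_1^2 \wp_{11}$ — so as to guarantee $k \geq 3$ rather than merely $k \geq 2$.
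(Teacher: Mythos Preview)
Your approach is essentially the same as the paper's: rewrite $h_p^{(\boldsymbol{0})}(Q) = H_p(Q) - \log_p\bigl(\sigma_p^{(\boldsymbol{0})}(\bt(Q))^2\wp_{11}(Q)\bigr)$ via Proposition~\ref{prop:standardformhhat} and the identity $x_4/x_1 = \wp_{11}$, show the correction term is of the form $1 + (\text{high order})$, take the limit along the infinite set $S(P)$ from Lemma~\ref{lem:S(P)infinite}, and kill the correction with Lemma~\ref{lemma:mostgenerallemma}.

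Two small points where the paper is sharper than your sketch. First, the paper does not claim $\Psi$ has $\Z_p$-coefficients; it only asserts $\Psi \in 1 + (t_1,t_2)^4\,\Q[[t_1,t_2]]$, and then handles the convergence hypothesis of Lemma~\ref{lemma:mostgenerallemma} separately by citing \cite[Theorem~3.3]{bianchi3} for convergence of $\sigma_p^{(\boldsymbol{0})}$ on $(p\Z_p)^2$. Your integrality claim is unnecessary and not obviously justified, so you should instead argue convergence directly as the paper does. Second, the paper actually obtains $k=4$: from \cite[Appendix~C]{bianchi3} one has $\sigma_p^{(\boldsymbol{0})}(\bt) \in t_1\bigl(1 + \tfrac{f_2}{2}t_1^2 + (t_1,t_2)^4\Q[[t_1,t_2]]\bigr)$, and squaring and multiplying by the expansion \eqref{eq:coordinatefunctionsKummersurface} of $\wp_{11}$ cancels the degree-$2$ term exactly (there are no degree-$3$ terms on either side), giving $\Psi \in 1 + (t_1,t_2)^4\Q[[t_1,t_2]]$. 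So the ``main obstacle'' you flag is resolved cleanly, and you need not worry about scraping by with $k \geq 3$.
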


\begin{proof}
    Let us consider a point $Q \in J_{\operatorname{nice}} \setminus \supp(\Theta)$. Then $Q \in J_1(\Q_p)$ and $Q \neq \cO$, so $Q$ is not a torsion point, and $w_4(Q) \neq 0$. We get
    \begin{equation}\label{eq:expansionhphatusingHp}
         \begin{aligned}
        h_p^{(\boldsymbol{0})}(Q) &= - \log_p\left(\frac{\sigma_p^{(\boldsymbol{0})}(\bt(Q))^2}{w_1(Q)}\right) &\text{(Proposition \ref{prop:standardformhhat})} \\
        &=   - \log_p\left(\sigma_p^{(\boldsymbol{0})}(\bt(Q))^2 \frac{w_4(Q)}{w_1(Q)}\right) + \log_p(w_4(Q))\\
        &= - \log_p\left(\sigma_p^{(\boldsymbol{0})}(\bt(Q))^2 \wp_{11}(Q)\right) + H_p(Q).
    \end{aligned}
    \end{equation}
   
    Recall that $\wp_{11}$ has an expansion of the form \eqref{eq:coordinatefunctionsKummersurface}. We see in \cite[Appendix C]{bianchi3} that $\sigma_p^{(\boldsymbol{0})}$ has an expansion of the form
    \begin{align*}
        \sigma_p^{(\boldsymbol{0})}(\bt) \in t_1\left(1 + \frac{f_2}{2}t_1^2 + (t_1, t_2)^4 \Q[[t_1, t_2]]\right).
    \end{align*}
We deduce that
    \begin{align}
        \sigma_p^{(\boldsymbol{0})}(\bt)^2\wp_{11}(\bt) \in 1 + (t_1, t_2)^4\Q[[t_1, t_2]].\label{eq:expsigmap11}
    \end{align}
    Because $\sigma_p^{(\boldsymbol{0})}(\bt)$ converges for $\bt \in p\Z_p \times p\Z_p$ by \cite[Theorem 3.3]{bianchi3}, and $t_1^2\wp_{11}(\bt)$ converges on $p\Z_p \times p\Z_p$, we conclude that $\sigma_p^{(\boldsymbol{0})}(\bt)^2\wp_{11}(\bt)$ converges on a neighborhood of $(0, 0)$.
    
    Using Lemma \ref{lem:S(P)infinite}, we get
    \begin{align*}
        h_p^{(\boldsymbol{0})}(P) &= \lim_{n \ra \infty} \frac{1}{p^{2n}} h_p^{(\boldsymbol{0})}(p^n P)\\
        &= \lim_{n \ra \infty} \frac{1}{p^{2n}} H_p(p^n P) - \lim_{n \ra \infty} \frac{1}{p^{2n}} \log_p\left(\sigma_p^{(\boldsymbol{0})}(\bt(p^n P))^2 \wp_{11}(p^n P)\right) \qquad \ \ \text{(using \eqref{eq:expansionhphatusingHp})}\\
        &= h_p(P). \qquad \qquad \qquad \qquad \qquad \qquad \; \, \, \text{(using  \eqref{eq:expsigmap11}, Corollary \ref{cor:ordtipnP} and Lemma \ref{lemma:mostgenerallemma})}
    \end{align*}
\end{proof}

\begin{proof}[Proof of Theorem \ref{thm:comparison}]
    It follows from the quadraticity of $h_p$ and $h_p^{(\boldsymbol{0})}$ that $h_p = h_p^{(\boldsymbol{0})}$ on all of $J(\Q)$. Namely, if $P \in J(\Q) \setminus J_{\operatorname{tors}}$, we have by \cite[Lemma 4.1]{bianchi3} that there is a positive integer $m$ such that \[mP \in J_{\operatorname{nice}} \setminus \supp(\Theta),\] and then $h_p(P) = \frac{1}{m^2} h_p(m P) = \frac{1}{m^2} h_p^{(\boldsymbol{0})}(m P) = h_p^{(\boldsymbol{0})}(P)$. For $P \in J_{\operatorname{tors}}(\Q)$ we have $h_p(P) = h_p^{(\boldsymbol{0})}(P) = 0$.
\end{proof}

In this paper, we gave an alternative construction of the height $h_p^{(\boldsymbol{0})}$ in terms of a limit, and showed the existence of the limit and quadraticity of the resulting height independently of the construction of Bianchi. As suggested by the referee, we can adapt the definition of the naive height to recover the heights $h_p^{(\boldsymbol{c})}$ for other choices of $\boldsymbol{c} = (c_{ij})_{i, j} \in M_{2 \times 2}(\Q_p)$ with $c_{12} = c_{21}$. Using the existence of Bianchi's heights $h_p^{(\boldsymbol{c})}$, we can show that these naive heights $H_p^{(\boldsymbol{c})}$ in a limit recover those quadratic heights. 

Fix some $\boldsymbol{c} = (c_{ij})_{i, j} \in M_{2 \times 2}(\Q_p)$ with $c_{12} = c_{21}$. Let us write \[\wp^{(\boldsymbol{c})} = \wp_{11} + c_{22} \wp_{12} - c_{12} \wp_{22} - c_{11}.\] It follows from \eqref{eq:coordinatefunctionsKummersurface} that \[\wp^{(\boldsymbol{c})} \in t_1^{-2} \left(1 - f_2 t_1^2 - c_{22}t_2^2 -2c_{12} t_1 t_2 - c_{11}t_1^2 + (t_1, t_2)^3\Q_p[[t_1, t_2]] \right).\]
From \cite[\S 3.2, Appendix C]{bianchi3}, we know that
\[\sigma_p^{(\boldsymbol{c})}(\boldsymbol{t})^2 \in t_1^2 \left(1 + (f_2 + c_{11}) t_1^2 + 2 c_{12} t_1 t_2 + c_{22} t_2^2 + (t_1, t_2)^4 \Q_p[[t_1, t_2]] \right),\]
so we obtain
\[\sigma_p^{(\boldsymbol{c})}(\boldsymbol{t})^2 \wp^{(\boldsymbol{c})} \in 1 + (t_1, t_2)^3\Q_p[[t_1, t_2]].\]
By \cite[Proposition 3.4(i)]{bianchi3}, $\sigma_p^{(\boldsymbol{c})}$ converges on $p^m \Z_p \times p^m \Z_p$ for some $m \in \Z_{>0}$. We also see that $t_1^2 \wp^{(\boldsymbol{c})}$ converges on $p\Z_p \times p\Z_p$, so we conclude that $\sigma_p^{(\boldsymbol{c})}(\boldsymbol{t})^2 \wp^{(\boldsymbol{c})}$ converges on a neighborhood of $(0,0)$.

Let us define a new subgroup \[J_{\boldsymbol{c}} = \{P \in J_{\operatorname{nice}} \mid \boldsymbol{t}(P) \in p^d \Z_p \times p^d \Z_p\} \subseteq J(\Q),\] where $d = \max_{i, j}\left\{\left\lceil\frac{1}{2}(1-\ord_p(c_{ij}))\right\rceil, m\right\}$. It follows from the argument in \cite[Chapter 7, \S 5]{casselsflynn} that this is a finite index subgroup of $J(\Q)$. We define more generally the naive $p$-adic height\[H_p^{(\boldsymbol{c})}(P) \colonequals \log_p(w_4(P)-c_{22}w_3(P) - c_{12} w_2(P) - c_{11} w_1(P))\] for $P \in J_{\boldsymbol{c}} \setminus \supp(\Theta)$. We note that $H_p^{(\boldsymbol{c})}(P)$ is well-defined on its domain of definition. Namely, \[w_4(P)-c_{22}w_3(P) - c_{12} w_2(P) - c_{11} w_1(P) = w_1(P) \wp^{(\boldsymbol{c})}(\boldsymbol{t}(P)),\] and $w_1(P) = 0$ if and only if $\frac{1}{\wp_{11}}(\boldsymbol{t}(P)) = 0$. We can deduce from \eqref{eq:coordinatefunctionsKummersurface} that this happens precisely when $t_1(P) = 0$, i.e. when $P \in \supp(\Theta)$ (see \cite[Lemma 3.2.4]{tripThesis}). Furthermore, on $J_{\boldsymbol{c}} \setminus \supp(\Theta)$, we have by construction that \[\wp^{(\boldsymbol{c})}(\boldsymbol{t}(P)) \neq 0.\] Hence \[w_1(P)\wp^{(\boldsymbol{c})}(\boldsymbol{t}(P)) \neq 0.\]

\begin{theorem}\label{thm:generalnaiveheight}
    Let $P \in J_{\boldsymbol{c}} \setminus \supp(\Theta)$. Then \[\lim_{n \ra \infty}\frac{1}{p^{2n}} H_p^{(\boldsymbol{c})}(p^n P) = h_p^{(\boldsymbol{c})}(P).\]
    In particular, the limit exists and can be extended to a quadratic height function on $J(\Q)$.
\end{theorem}

\begin{proof}
    A similar argument as in Proposition \ref{prop:standardformhhat} shows that for $Q \in J_{\boldsymbol{c}} \setminus \supp(\Theta)$, we have \[h_p^{(\boldsymbol{c})}(Q) = - \log_p\left(\frac{\sigma_p^{(\boldsymbol{c})}(\bt(Q))^2}{w_1(Q)}\right).\] Rewriting this using the definition of $H_p^{(\boldsymbol{c})}$ and $\wp^{(\boldsymbol{c})}$ gives
    \[h_p^{(\boldsymbol{c})}(Q) = - \log_p\left(\sigma_p^{(\boldsymbol{c})}(\bt(Q))^2 \wp^{(\boldsymbol{c})}(Q)\right) + H_p^{(\boldsymbol{c})}(Q).\] 
    It follows from Lemma \ref{lemma:mostgenerallemma}, using our observations on the convergence of $\sigma_p^{(\boldsymbol{c})}(\boldsymbol{t})^2 \wp^{(\boldsymbol{c})}(\boldsymbol{t})$ together with Corollary \ref{cor:ordtipnP} and Lemma \ref{lem:S(P)infinite}, that \[\lim_{n \ra \infty}\frac{1}{p^{2n}} H_p^{(\boldsymbol{c})}(p^n P) = h_p^{(\boldsymbol{c})}(P)\] for $P \in J_{\boldsymbol{c}} \setminus \supp(\Theta)$.
\end{proof}

This gives an alternative construction for the quadratic heights in \cite{bianchi3} for any choice of $\boldsymbol{c} = (c_{ij})_{i, j} \in M_{2 \times 2}(\Q_p)$ with $c_{12} = c_{21}$, including the canonical choice of Blakestad.

{\small
\printbibliography[heading=bibintoc]}

\end{document}